\documentclass[12pt]{amsart}
\usepackage{fullpage}
\usepackage[cp1250]{inputenc}
\usepackage[T1]{fontenc}
\usepackage{latexsym}
\usepackage{amssymb,amsmath}
\usepackage{dsfont}
\usepackage{pb-diagram}
\usepackage{textcomp}
\usepackage{stmaryrd}
\usepackage{mathrsfs}
\usepackage{amsmath}
\usepackage{amssymb}
\usepackage{multicol}



\newcommand{\CC}{\mathbb C}

\newcommand{\PP}{\mathbb P}

\newcommand{\RR}{\mathbb R}

\newcommand{\ZZ}{\mathbb Z}
\newcommand{\cA}{\mathcal A}

\newcommand{\cM}{\mathcal M}

\newcommand{\cO}{\mathcal O}



\newcommand{\Sum}{\sum\limits}

\newcommand{\tensor}{\otimes}
\newcommand{\emb}{\hookrightarrow}

\renewcommand{\Bar}{\overline}

\newcommand{\imic}{\cong}

\newcommand{\ratmap}{\dasharrow}















\newcommand{\Km}{\mathop{\mathrm {Km}}\nolimits}


\DeclareMathOperator{\Ima}{Im}

\DeclareMathOperator{\diag}{diag}
\DeclareMathOperator{\NS}{NS}
\DeclareMathOperator{\Is}{Is}
\DeclareMathOperator{\Hom}{Hom}

\DeclareMathOperator{\mult}{mult}

\DeclareMathOperator{\Aut}{Aut}

\newcommand\To{\longrightarrow}
\newcommand\fD{\mathfrak{D}}

\newcommand\SH{\mathfrak{h}}

\theoremstyle{plain}
\newtheorem{theorem}{Theorem}[section]
\newtheorem{definition}[theorem]{Definition}
\newtheorem{lemma}[theorem]{Lemma}
\newtheorem{corollary}[theorem]{Corollary}
\newtheorem{proposition}[theorem]{Proposition}

\theoremstyle{remark}
\newtheorem{remark}{Remark}


\newcommand{\Th}[2]{{\theta\genfrac{[}{]}{0pt}{1}{#1}{#2}}}

\begin{document}
\title{Hyperelliptic genus 4 curves on abelian surfaces}
\author{Pawe\l{} Bor\'owka, G.K. Sankaran}
\address{P. Bor\'owka \\ Institute of Mathematics, Jagiellonian
University in Krak\'ow, ul.\ prof Stanis{\l}awa {\L}ojasiewicza~6, 30-348 Krak\'ow, Poland}
\email{Pawel.Borowka@uj.edu.pl}
\address{G.K. Sankaran \\
Department of Mathematical Sciences, University of Bath, Bath BA2~7AY,
England}
\email{G.K.Sankaran@bath.ac.uk}

\begin{abstract}

We study smooth curves on abelian surfaces, especially for genus~$4$,
when the complementary subvariety in the Jacobian is also a
surface. We show that up to translation there is exactly one genus~$4$
hyperelliptic curve on a general $(1,3)$-polarised abelian surface. We
investigate these curves and show that their Jacobians contain a
surface and its dual as complementary abelian subvarieties.
\end{abstract}
\maketitle

\section{Introduction}\label{sect:intro}

A smooth curve $C$ in an abelian surface~$A$ over the complex numbers
cannot be of genus~$0$, and if it is of genus~$1$ then $A$ is
isogenous to a product $C\times C'$ of genus~$1$ curves. A smooth
genus~$2$ curve can be embedded only in one abelian surface, namely as
the theta divisor in its Jacobian. For curves of higher genus, we do
not know much. There is one well known example: \'etale degree~$n$
cyclic covers of genus~$2$ curves, which provide curves embedded in
$(1,n)$-polarised surfaces. On the other hand, a general section of a
polarising line bundle on a $(d_1,d_2)$-polarised surface defines a
smooth curve of genus $g=1+d_1d_2$. In general we know little about
the geometry of these curves. For $(1,2)$ polarisation,
W. Barth~\cite{B} shows that all curves in the linear series of a
polarising line bundle are double covers of elliptic curves branched
in $4$ points. For $E\times E$ with a $(1,3)$ polarisation,
Ch.~Birkenhake and H.~Lange \cite{BL2} listed the types of curves
in the linear series.

In this paper, we describe a distinguished family of curves in
$(1,3)$-polarised abelian surfaces. As the construction is similar to
the construction of the theta divisor on a principally polarised
surface and their properties are similar, we call them $(1,3)$ theta
divisors.

The paper is organised in the following way.  Section~\ref{sect:prelim}
first provides some notation and background. Then, assuming that
$f_C\colon C \to A$ is an embedding of a smooth complex curve of
genus $g>2$ in an abelian surface, we show (Lemma~\ref{lem:kerf}) that
$f_C$ induces an exact sequence with connected kernel
$$
0\To K\To JC\To A\To 0.
$$ 
Thus $JC$ is non-simple and contains $K$ and $\hat{A}$ as
complementary abelian subvarieties. Moreover, the restriction to
$\hat{A}$ of the natural principal polarisation on $JC$ is dual to the
polarisation $c_1(\cO_A(f_C(C)))$ on $A$.

Section~\ref{sect:g=4} focuses on the case $g=4$, where there is
additional symmetry. Firstly, $g=1+d_1d_2$, so the type of
polarisation has to be $(1,3)$, and secondly $\dim(JC)=4$, so $K$ is
also a $(1,3)$-polarised surface. One main result of the paper
(Theorem~\ref{thm:jednahiper}) is that a general $(1,3)$-polarised
abelian surface contains, up to translation, exactly one smooth
hyperelliptic curve of genus~$4$. This result is consistent with the
number~$9$ of hyperelliptic genus~$4$ curves on a $(1,3)$-polarised
surface given by the counting function $h^{A,FLS}_{g,\beta}$ defined
in \cite{BOPY}: see Remark~\ref{rem:BOPY}.

Moreover, in this case of $g=4$ we show that $K=A$, and indeed we
characterise Jacobians of such curves as hyperelliptic Jacobians that
contain a surface $A$ and $\hat{A}$ as complementary abelian
subvarieties: see Theorem~\ref{thm:dualpair}.

Some results of this paper are contained in the PhD thesis of the
first author.

\section{Preliminaries}\label{sect:prelim}
\subsection{Theta functions}\label{subsect:thetafns}
We recall very briefly some basic facts about theta functions,
characteristics and theta constants. For simplicity of notation we
shall immediately restrict our attention to $(1,3)$-polarised surfaces
$(A,H)$.  A line bundle denoted $L$ will always be a \emph{polarising
  line bundle}, i.e.\ $c_1(L)=H$.  For generalisations (to higher
degree and higher dimension), proofs and details, we refer
to~\cite{BL}.  For $Z\in\SH_2$ (the Siegel upper half-plane), we
denote by $A_Z$ the complex torus $\CC^2/(Z\ZZ^2+\diag(1,3)\ZZ^2)$,
always with the polarisation $H\in \NS(A_Z)$ whose Riemann form, also
denoted by $H$, is given by $H=\Ima(Z)^{-1}$. We take the standard
decomposition $\CC^2=Z\RR^2\oplus\diag(1,3)\RR^2$.

Because we use both Riemann canonical and Riemann classical theta
functions, we will have to abuse notation in the following way.  For a
canonical theta function we think of its characteristic as a point
$c\in\CC^2$ (a complex characteristic) and write $c=c_1+c_2$, where
$c_1\in Z\RR^2$ and $c_2\in \RR^2$. For a classical theta function its
characteristic will be a pair $(c_1,c_2)$ (a real characteristic),
where $c_1,\,c_2\in\RR^2$, and we put $c=Zc_1+\diag(1,3)c_2\in\CC^2$.

Any $2$-torsion point $x\in A_Z[2]$ can be written as the image of
$x=Zc_1+\diag(1,3)c_2$ for some $c_1,\,c_2\in\frac{1}{2}\ZZ^2$.

\begin{definition}\label{def:even2torsion}
Define $e_*(c_1,c_2)=\exp(4\pi
i\ ^tc_1c_2)\in\{\pm 1\}$.
A $2$-torsion point $x=Zc_1+\diag(1,3)c_2$ is called \emph{even} or \emph{odd}
depending on the parity of $e_*(c_1,c_2)$.
\end{definition}
A simple computation shows that there are exactly ten even and six odd
$2$-torsion points.

On $A_Z$ there exists a unique polarising line bundle of
characteristic~$0$, which we denote $L_0$. Any other polarising
line bundle is of the form $L=t^*_cL_0$ for some $c\in A_Z$ unique up to
$K(L)=\{x\in A_Z\colon t^*_x(L)\cong L\}\cong\ZZ/3\ZZ$.

The \emph{classical Riemann theta function} of real characteristic
$(c_1,c_2)$ is (see~\cite[Section~3.2]{BL})
\begin{equation}\label{eq:classicaltheta}
\Th{c_1}{c_2}(v,Z) =
\sum_{l\in\ZZ^2}\exp\Big(\pi i
\ ^t(l+c_1)Z(l+c_1)+2\pi i \ ^t(v+c_2)(l+c_1)\Big).
\end{equation}
The \emph{canonical Riemann theta function} of complex characteristic $c$ is
\begin{equation}\label{eq:canonicaltheta}
\begin{split}
\theta^c(v)=\exp&\Big(-\pi
H(v,c)-\frac{\pi}{2}H(c,c)+\frac{\pi}{2}B(v+c,v+c)\Big)\\
&\cdot\Sum_{\lambda\in Z\ZZ^2}\exp\Big(\pi
(H-B)(v+c,\lambda)-\frac{\pi}{2}(H-B)(\lambda,\lambda)\Big).
\end{split}
\end{equation}
where $H$ is a Riemann form and $B$ is the bilinear extension of
$H|_{\ZZ^2}$. For any $\eta\in K(L)_1=K(L)|_{Z\RR^2}$, we define
\[
\theta^c_\eta=a_L(\eta, \cdot)^{-1}\theta^c(\cdot+\eta)
\] 
For details, again see~\cite[Section 3.2]{BL}.

In the case of $(1,3)$-polarised abelian surfaces, $K(L)_1$ is generated
by $\omega = Z(0,\frac{1}{3})$ which, when we write classical theta
functions, becomes $\omega=(0, \frac{1}{3})$ by the above convention.
Then $H^0(L_0)$ can be identified with a space of classical theta
functions and $\Th{0}{0},\,\Th{\omega}{0},\,\Th{-\omega}{0}$ form a
basis. It can also be identified with a space of
canonical theta functions and in this case
$\theta^0_0,\,\theta^0_{\omega},\,\theta^0_{-\omega}$ form a basis.

\begin{definition}\label{def:symmetricbundle}
A polarising line bundle on $A=A_Z$ is said to be \emph{symmetric} if
$(-1)^*L\cong L$. 
\end{definition}

For odd degree, the characteristic of a symmetric line bundle $L$ can
be chosen uniquely to be a $2$-torsion point on $A$.  If $L$ is
symmetric, then $(-1)$ acts on $H^0(L)$, acting on (either kind of) theta
function by $(-1)^*\theta(z)=\theta(-z)$. The Inverse
Formula~\cite[Inverse Formula 4.6.4]{BL} gives this action in terms of
the basis of canonical theta functions.
\begin{proposition}[BL, Inverse Formula 4.6.4]
Let $L$ be a symmetric line bundle of characteristic $c=c_1+c_2$. Then
for $\eta\in K(L)_1$ we have
\[
(-1)^*\theta^c_\eta= \exp\big(4\pi i\Ima H(\eta+c_1,c_2)\big)\theta^c_{-\eta-2c_1}.
\]
\end{proposition}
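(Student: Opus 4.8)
The plan is to prove the Inverse Formula by direct computation starting from the explicit series definition of the canonical theta function and tracking how the sign conventions interact with the shift by $\eta\in K(L)_1$.

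First I would unpack the definition. The quantity $(-1)^*\theta^c_\eta$ is by definition $\theta^c_\eta(-v)=a_L(\eta,\cdot)^{-1}\theta^c(-v+\eta)$, so the real task is to understand $\theta^c(-v+\eta)$ and re-express it in terms of $\theta^c_{-\eta-2c_1}(v)$. The key structural fact I would exploit is the \emph{functional equation} of the canonical theta function with respect to translation by periods: for $\lambda$ in the lattice $Z\ZZ^2+\diag(1,3)\ZZ^2$ one has a quasi-periodicity relation $\theta^c(v+\lambda)=(\text{automorphy factor})\cdot\theta^c(v)$, and more generally a transformation law under shifting the \emph{characteristic} $c$. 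The strategy is to compare $\theta^c(-v)$ with $\theta^{-c}(v)$: replacing the summation index $\lambda\mapsto-\lambda$ in the series \eqref{eq:canonicaltheta} and using that $H$, $B$ are bilinear (so quadratic terms are even while the cross terms flip sign) should give a clean identity of the form $\theta^c(-v)=\theta^{-c}(v)$, possibly up to an explicit exponential factor coming from the prefactor $\exp(-\pi H(v,c)-\tfrac{\pi}{2}H(c,c)+\tfrac{\pi}{2}B(v+c,v+c))$.

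Next I would handle the characteristic bookkeeping. Since $c=c_1+c_2$ with $c_1\in Z\RR^2$ and $c_2\in\RR^2$, and since $L$ is symmetric, $c$ is a $2$-torsion point, so $-c\equiv c\pmod{\Lambda}$ and in particular $-c$ differs from $c$ by a lattice vector whose contribution is governed by the automorphy factor $a_L$. The appearance of $-\eta-2c_1$ in the conclusion is exactly what one expects: the $-1$ from $(-1)^*$ sends $\eta\mapsto-\eta$, and the $-2c_1$ correction arises when we move from the characteristic $-c=-c_1-c_2$ back to the reference characteristic $c=c_1+c_2$, shifting the period-one part of the characteristic by $-2c_1$. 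I would assemble the scalar factor by collecting the exponential prefactors: the terms linear in $v$ must cancel (so that the result is genuinely a multiple of a single basis theta function), and the remaining constant is the claimed $\exp(4\pi i\,\Ima H(\eta+c_1,c_2))$. Checking that the imaginary part $\Ima H$ is precisely what survives — rather than $H$ itself — is where the real and imaginary decomposition of the Riemann form, together with the relation $B=H|_{\ZZ^2}$ extended bilinearly, enters.

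The main obstacle I anticipate is the careful accounting of the automorphy factors $a_L(\eta,\cdot)$ on both sides together with the phase contributions from reindexing. Each of $\theta^c_\eta$ and $\theta^c_{-\eta-2c_1}$ carries its own factor $a_L^{-1}$, and the difference of these cocycle values is what ultimately produces the exponential $\exp(4\pi i\,\Ima H(\eta+c_1,c_2))$; getting the sign and the argument of this phase exactly right, while ensuring all $v$-dependent terms vanish, is the delicate part. Since this is precisely \cite[Inverse Formula 4.6.4]{BL}, in practice I would either cite it directly or, to keep the paper self-contained, verify the three basis cases $\eta\in\{0,\omega,-\omega\}$ relevant to the $(1,3)$-polarised setting, where the computation reduces to a short finite check using that $K(L)_1$ is generated by $\omega=Z(0,\tfrac{1}{3})$.
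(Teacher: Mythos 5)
The paper offers no proof of this proposition at all: it is imported verbatim from \cite[Inverse Formula 4.6.4]{BL}, so your closing fallback of simply citing that reference is exactly what the authors do, and in that sense your proposal matches the paper's approach. Your computational sketch does identify the correct mechanisms of the standard proof in \cite{BL} --- the comparison of $\theta^c(-v)$ with $\theta^{-c}(v)$ via reindexing $\lambda\mapsto-\lambda$, the shift of the characteristic by $-2c_1$ when returning from $-c$ to $c$, and the phase produced by the cocycle $a_L(\eta,\cdot)$ --- but be aware that as written it is a plan rather than a proof: the steps you defer as ``the delicate part'' (verifying that all $v$-dependent exponential factors cancel and that the surviving constant is precisely $\exp\big(4\pi i\,\Ima H(\eta+c_1,c_2)\big)$ rather than some other combination of $H$ and $B$) are the entire content of the argument, and nothing in the sketch certifies the sign or the appearance of $\Ima H$ specifically. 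Since the paper treats this as a quoted external result, citing \cite{BL} is the appropriate resolution; if you did want a self-contained verification, your suggestion of checking the finite set of cases $\eta\in\{0,\omega,-\omega\}$ for the $(1,3)$ situation is a sound and economical alternative to the general computation.
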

We denote the $(\pm 1)$-eigenspaces by $H^0(L)_{\pm}$ and call the
corresponding theta functions even and odd theta functions.

\subsubsection{Divisors}
Let $\fD$ be an effective ample divisor on an abelian surface $A$. We
say that $\fD$ is symmetric if $(-1)^*\fD=\fD$. By $\mult_x\fD$ we
denote the multiplicity of $\fD$ in $x\in A$. Then $\fD$ is called
even or odd depending on the parity of $\mult_0\fD$.

We need to understand the behaviour of a symmetric divisor in $A[2]$.
\begin{definition}\label{def:A2pm}
Define
\begin{align*}
A[2]_{\fD}^+&=\{c\in A[2]\colon \mult_c(\fD)\equiv 0 \mod 2\},\\
A[2]_{\fD}^-&=\{c\in A[2]\colon \mult_c(\fD)\equiv 1 \mod 2\}.
\end{align*}
\end{definition}
\cite[Exercise 4.12.14]{BL} gives us the following.
\begin{proposition}\label{prop:tenpoints}
If $\fD$ is an effective symmetric divisor on $(A,H)$ such that
$\cO(\fD)=L_0$, of type $(1,3)$, then
\[
\# A[2]_{\fD}^\pm=
\begin{cases}
      2(4\pm 1) \ \text{if $\fD$ is even}\\ 
      2(4\mp 1)\ \text{ if $\fD$ is odd.}
\end{cases}
\]
Moreover, there are ten (respectively six) symmetric bundles $L$ such
that $\# A[2]_{\fD}^-=6$ (respectively $\# A[2]_{\fD}^-=10$) 
 for all even symmetric divisors with $\cO(\fD)=L$.
\end{proposition}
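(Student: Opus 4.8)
The plan is to translate the parity of $\mult_x\fD$ at a $2$-torsion point into a statement about eigenvalues of the involution $\iota=(-1)$, and then count those eigenvalues. Since $\fD$ is symmetric, a section $s\in H^0(L_0)$ defining it satisfies $(-1)^*s=\sigma s$ for some $\sigma\in\{\pm1\}$, so $\fD$ is cut out by an even or an odd section. For $x\in A[2]$ the normalised isomorphism $(-1)^*L_0\isoto L_0$ acts on the fibre $(L_0)_x$ by a scalar $\epsilon(x)\in\{\pm1\}$, and the normalisation at the origin gives $\epsilon(0)=1$. Choosing local analytic coordinates $z$ at $x$ in which $\iota$ is $z\mapsto -z$ and trivialising $L_0$ equivariantly, $s$ becomes a function $f$ with $f(-z)=\sigma\epsilon(x)f(z)$; thus $f$ is even or odd according to the sign $\sigma\epsilon(x)$, and $\mult_x\fD=\ord_0 f$ has the same parity. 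Hence
\[
A[2]_\fD^+=\{x\in A[2]\colon \epsilon(x)=\sigma\},\qquad A[2]_\fD^-=\{x\in A[2]\colon \epsilon(x)=-\sigma\},
\]
and evaluating at $x=0$ shows $\fD$ is even exactly when $\sigma=1$.

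It remains to count $N_\pm=\#\{x\in A[2]\colon\epsilon(x)=\pm1\}$, where $N_++N_-=16$. I would obtain $N_+-N_-$ from the holomorphic Lefschetz fixed point formula applied to $\iota$ acting on $L_0$. Since $L_0$ is ample on the abelian surface, $H^i(A,L_0)=0$ for $i>0$, and each fixed point $x\in A[2]$ contributes $\epsilon(x)/\det(\mathrm{id}-d\iota_x)=\epsilon(x)/4$ because $d\iota_x=-\mathrm{id}$ on the tangent plane. Thus $h^0(L_0)_+-h^0(L_0)_-=\tfrac14(N_+-N_-)$. The Inverse Formula at characteristic $0$ reads $(-1)^*\theta^0_\eta=\theta^0_{-\eta}$, so on the basis $\theta^0_0,\theta^0_\omega,\theta^0_{-\omega}$ the map $(-1)^*$ fixes $\theta^0_0$ and interchanges $\theta^0_{\pm\omega}$; its trace is $1$, i.e.\ $h^0(L_0)_+-h^0(L_0)_-=2-1=1$. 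Therefore $N_+-N_-=4$, giving $N_+=10$ and $N_-=6$, in agreement with the ten even and six odd $2$-torsion points of Definition~\ref{def:even2torsion}. For even $\fD$ ($\sigma=1$) this yields $\#A[2]_\fD^+=N_+=2(4+1)$ and $\#A[2]_\fD^-=N_-=2(4-1)$, while for odd $\fD$ ($\sigma=-1$) the roles of $N_\pm$ are exchanged, giving $2(4-1)$ and $2(4+1)$; this is exactly the claimed formula.

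For the final assertion I would run the same argument for every symmetric polarising bundle. These are indexed by their characteristic $c\in A[2]$, and since $K(L)\cong\ZZ/3\ZZ$ meets $A[2]$ only in $0$ there are exactly sixteen of them. For each such $L$ the local analysis again gives $A[2]_\fD^-=\{x\colon\epsilon^L(x)=-\sigma\}$ with $\epsilon^L(0)=1$, so every even symmetric divisor ($\sigma=1$) in $|L|$ has $\#A[2]_\fD^-=N_-^L$, independently of the divisor chosen. Applying the Lefschetz formula again, $N_+^L-N_-^L=4\big(h^0(L)_+-h^0(L)_-\big)$, and the trace of $(-1)^*$ is computed from the Inverse Formula $(-1)^*\theta^c_\eta=\exp\!\big(4\pi i\,\Im H(\eta+c_1,c_2)\big)\theta^c_{-\eta-2c_1}$. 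Since $c$ is $2$-torsion we have $2c_1\equiv 0$, so $(-1)^*$ again fixes $\theta^c_0$ and interchanges $\theta^c_{\pm\omega}$; only $\theta^c_0$ contributes to the trace, which equals $\exp(4\pi i\,\Im H(c_1,c_2))=e_*(c)$. Hence $N_-^L=8-2e_*(c)$, equal to $6$ when $c$ is even and to $10$ when $c$ is odd. As there are ten even and six odd characteristics, ten bundles give $\#A[2]_\fD^-=6$ and six bundles give $\#A[2]_\fD^-=10$ for all their even symmetric divisors, as claimed.

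The step I expect to be most delicate is the local reduction in the first paragraph: one must justify the $\iota$-equivariant trivialisation of $L_0$ near each $2$-torsion point and check that the fibre scalar is exactly $\epsilon(x)$, so that the parity of $\mult_x\fD$ is correctly identified with the sign $\sigma\epsilon(x)$. The remaining work is the bookkeeping of the phases $\exp(4\pi i\,\Im H(c_1,c_2))$ and their identification with $e_*(c)$, together with confirming the ten/six split.
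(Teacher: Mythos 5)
Your argument is correct, but it is genuinely different from what the paper does: the paper gives no proof at all here, simply citing \cite[Exercise 4.12.14]{BL}, where the statement is obtained from the theory of the semicharacter $e^L_*$ on $A[2]$ developed in \cite[Section 4.7]{BL} (the multiplicity formula relating $\mult_x\fD \bmod 2$ to $e^L_*(x)$, together with the count of $\#\{x: e^L_*(x)=\pm1\}$ in terms of $h^0(L)_\pm$). You reconstruct exactly this chain of ideas but supply your own proofs of the two counting inputs: the holomorphic Lefschetz fixed point formula gives $N_+-N_-=4\bigl(h^0(L)_+-h^0(L)_-\bigr)$ cleanly (each of the sixteen fixed points contributing $\epsilon(x)/\det(2\cdot\mathrm{id})=\epsilon(x)/4$, higher cohomology vanishing by ampleness), and the Inverse Formula gives the trace of $(-1)^*$ on $H^0(L)$ as $e_*(c)$ --- a computation corroborated by Proposition~\ref{prop:13thetadivisorunique} of the paper, which records $h^0_+-h^0_-=\pm1$ according to the parity of the characteristic. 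This buys a self-contained derivation where the paper has only a reference, at the cost of invoking Atiyah--Bott. The two points you rightly flag as delicate are indeed the only ones requiring care: (a) the identification of the fibre scalar of the normalised isomorphism $(-1)^*L\isoto L$ with the sign governing the parity of $\mult_x\fD$ under an equivariant local trivialisation (this is precisely \cite[Prop.~4.7.7]{BL}); and (b) in the trace computation for general $c\in A[2]$, the index $-\eta-2c_1$ must be reduced modulo the lattice to land back in $K(L)_1$, which can a priori introduce an automorphy-factor scalar --- one should check it is trivial here (it is, since $2c_1\in Z\ZZ^2$ and the relevant classical theta factor at such points is $1$), or else fall back on Proposition~\ref{prop:13thetadivisorunique} for the value of the trace. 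Your bookkeeping $\exp(4\pi i\,{}^tc_1\,\mathrm{diag}(1,3)\,c_2)=\exp(4\pi i\,{}^tc_1c_2)$ for half-integral $c_i$ is also fine, so the ten/six split matches Definition~\ref{def:even2torsion}.
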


\subsection{Embedded curves}\label{subsect:curves}

Suppose that $f_C\colon C\to A$ is an embedding of a smooth curve of
genus $g>1$ in an abelian surface.  Let $(JC,\Theta)$ denote the
polarised Jacobian of $C$.  Without loss of generality, we can choose
a point $O\in C$ such that $f_C(O)=0$. Then, by the universal property
of Jacobians, we have the following diagram:
\begin{equation}\label{Diag1}
  \begin{diagram}
\dgARROWLENGTH=2.5em
    \node{C}\arrow[4]{e,t}{f_C}\arrow[2]{se,r}{\alpha_O}
    \node[4]{A}\\[2]\node[3]{JC}
\arrow [2]{ne,r}{f}
    \\[2]
    \node[1]{K^0}\arrow[2]{ne,r}{k}  
  \end{diagram}
\end{equation}
where $\alpha_O$ is the Abel-Jacobi map, $f$ is the canonical
homomorphism defined by the universal property and $K^0$ is the
identity component of the kernel of $f$. 

The image $f_C(C)$ generates $A$, so $f$ must be surjective, and hence
$K^0$ is an abelian subvariety of dimension $g-2$. The following lemma
tells us that in fact $K^0=\ker(f)$.

\begin{lemma}\label{lem:kerf}
The kernel of $f$ is connected. Hence, dualising the exact sequence
$$
0\To \ker(f)=K^0\stackrel{k}{\To} JC \stackrel{f}{\To} A\To 0
$$ 
we get an embedding $\hat f\colon \hat A \emb JC$.
\end{lemma}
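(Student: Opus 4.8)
The statement splits into a formal part and a substantive part. The formal part (the ``hence'') is the standard duality for abelian varieties: once we know the kernel is connected, the row $0\To K^0\stackrel{k}{\To} JC\stackrel{f}{\To} A\To 0$ is an exact sequence of abelian varieties, and applying the contravariant exact duality functor --- together with the self-duality $\widehat{JC}\cong JC$ furnished by the principal polarisation $\Theta$ --- yields $0\To \hat A\stackrel{\hat f}{\To} JC\To \widehat{K^0}\To 0$, so $\hat f$ is an embedding. The entire content is therefore to prove that $\ker f$ is connected.

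My plan is to reduce connectedness to a statement on first homology. For a surjective homomorphism $f\colon JC\To A$ of complex tori, writing $JC=V/\Lambda$ and $A=W/\Gamma$ with $f$ induced by a surjective linear map $F\colon V\To W$, a direct computation identifies the component group $\pi_0(\ker f)$ with $\mathrm{coker}\big(f_*\colon H_1(JC,\ZZ)\To H_1(A,\ZZ)\big)$. Since the Abel--Jacobi map induces an isomorphism $\alpha_{O*}\colon H_1(C,\ZZ)\isoto H_1(JC,\ZZ)$ and $f\circ\alpha_O=f_C$, connectedness of $\ker f$ becomes equivalent to surjectivity of $(f_C)_*\colon H_1(C,\ZZ)\To H_1(A,\ZZ)$. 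Dually, this is the injectivity of the pullback $f_C^*\colon\Pic^0(A)=\hat A\To\Pic^0(C)=JC$, which is exactly $\hat f$ under $\widehat{JC}\cong JC$; so the two formulations agree.

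The key geometric input is then that $C$ is an \emph{ample} divisor on $A$. A smooth curve of genus $g\ge 2$ on an abelian surface has $C\cdot C=2g-2>0$, and an irreducible curve of positive self-intersection on an abelian surface is ample: it is nef, being effective on a homogeneous variety, and the Hodge index theorem forces $C\cdot D>0$ for every curve $D$, so Nakai--Moishezon applies. For a smooth ample divisor in a surface the Lefschetz hyperplane theorem gives that $H_1(C,\ZZ)\To H_1(A,\ZZ)$ is surjective (this is the range $k=n-1$ with $n=2$). Hence $(f_C)_*$ is onto, $\ker f$ is connected, and dualising as above produces the embedding $\hat f\colon\hat A\emb JC$.

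The step to watch is the passage from an injectivity statement to the vanishing of the \emph{finite} part of $\ker f$. Injectivity of the tangent (Hodge) map $H^1(A,\cO_A)\To H^1(C,\cO_C)$ only shows that $\ker f$ is finite; it does not by itself exclude a nontrivial component group. This is precisely why the correct input is the integral surjectivity of $(f_C)_*$ on $H_1$ --- equivalently, that $f_C^*$ has saturated image on lattices --- rather than a mere dimension count, and the Lefschetz theorem is exactly what supplies this. Once that is in hand the remainder is formal.
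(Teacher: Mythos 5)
Your proof is correct, but it follows a genuinely different route from the paper's. You translate connectedness of $\ker(f)$ into surjectivity of $(f_C)_*\colon H_1(C,\ZZ)\to H_1(A,\ZZ)$ (via the identification of the component group of $\ker(f)$ with $\mathrm{coker}\,f_*$ on lattices and the isomorphism $H_1(C,\ZZ)\cong H_1(JC,\ZZ)$), and then obtain that surjectivity from the Lefschetz hyperplane theorem applied to $C$ as a smooth ample divisor on $A$; your ampleness argument ($C^2=2g-2>0$, nefness of effective divisors on a homogeneous variety, Hodge index, Nakai--Moishezon) is sound and consistent with the ampleness the paper itself asserts. The paper instead argues inside $JC$: it observes that the components $K^0,\dots,K^t$ of $\ker(f)$ are numerically equivalent translates of $K^0$, shows that the difference surface $\delta(C\times C)$ is genuinely $2$-dimensional (using that a curve of genus $g>1$ has no transitive automorphism group), proves $\delta(C\times C)\cap\ker(f)=\{0\}$ because $f$ is injective on each Abel--Jacobi curve $\alpha_Q(C)$, and concludes that a second component $K^i$ would have to meet $\delta(C\times C)$ by numerical equivalence of complementary-dimensional cycles --- a contradiction. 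Your approach buys a cleaner conceptual statement (connectedness of $\ker(f)$ is exactly the integral surjectivity on $H_1$, dually the injectivity of $\hat f=f_C^*$ on $\Pic^0$) at the price of invoking the Lefschetz theorem for ample, not necessarily very ample, divisors; the paper's argument is more self-contained within intersection theory on abelian varieties and avoids that machinery. You are also right to flag that a tangent-space or dimension count only gives finiteness of the component group, not its triviality; both your Lefschetz input and the paper's numerical-equivalence trick are precisely what rule out the finite part.
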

\begin{proof}
The kernel of $f$ is a reduced effective $2$-cycle in $JC$ consisting
of a finitely many connected components $K^0,\dots,K^t$, each $K^i$
being a copy (a translate) of the identity component $K^0$, and in
particular numerically equivalent to $K^0$.

Each $P\in A$ defines Abel-Jacobi map $\alpha_P\colon C \to JC$, given
by $\alpha_P(Q)=\cO_C(Q-P)$. We define the difference map
$\delta\colon C\times C\to JC$ by
\[
\delta(P,Q)\mapsto \cO(P-Q)=\alpha_Q(P)\in JC.
\]
We claim that the image $\delta(C\times C)$ is an effective $2$-cycle.
Certainly the image is effective and irreducible, and has dimension at
least~$1$ because it contains $\delta(C\times\{O\})=\alpha_O(C)\imic
C$. Indeed it contains $\alpha_P(C)$ for every $P\in C$, so if it is
of dimension~$1$ then $\alpha_P(C)=\alpha_O(C)$ for every $P\in
C$. But then the isomorphism
\[
\tensor \cO_C(O-P)\colon \alpha_O(C)\To \alpha_P(C),
\]
which is the restriction of $t_{[\cO(O-P)]}$, becomes an automorphism
of $\alpha_O(C)$; moreover, the group of automorphisms acts
transitively on $\alpha_O(C)$ because $\alpha_O(P)=-\alpha_P(O)$ is sent to
$\alpha_O(Q)$ by  $t_{[\cO(Q-P)]}$. This is impossible, because a
curve of genus $g>1$ does not have transitive automorphism group.

Next, we show that $\delta(C\times C)\cap\ker{f}=\{0\}$, and in
particular is connected.  One inclusion is obvious. For the other,
choose a point $\cO(P-Q) \in \delta(C\times C)\cap \ker(f)$. It is
obviously in the image of $\alpha_Q$.  Now, from the universal
property of the Jacobian, $f$ makes the following diagram commutative
$$
\begin{diagram}
\dgARROWLENGTH=2em
\node{C}\arrow[2]{e,t}{f_C}\arrow[2]{s,r}{\alpha_Q}    \node[2]{A}\\
[2]\node{JC}\arrow[2]{e,t}{f}
\node[2]{A}\arrow[2]{n,r}{t_{f(Q)}}
  \end{diagram}
$$ 
But $C$ is embedded in $A$ and in $JC$ so $f|_{\alpha_Q(C)}$ has to be
injective. Hence $\ker(f)\cap\alpha_Q(C)=\{0\}$ and therefore
$\cO(P-Q)=0$.

That means that $\delta(C\times C)$ has non-empty intersection with
exactly one of the connected components of $\ker(f)$; but the
connected components are all numerically equivalent, so there can only
be one of them.
\end{proof}

\subsubsection{Polarisations}
Because $g>1$, the bundle $\cO_A(f_C(C))$ is ample, of some type
$(d_1,d_2)$ with $g=1+d_1d_2$.  Recall that the dual abelian variety
$\hat A$ of a polarised abelian variety $(A,H)$ carries a uniquely
defined dual polarisation, denoted~$\hat H$~\cite[Prop. 14.1.1]{BL}.

\begin{proposition}\label{prop:dualtheta} 
In Lemma~\ref{lem:kerf}, we have
$\hat{f}^*\Theta\equiv \widehat{\cO(f(C))}$ as classes in
$\NS(\hat{A})$.
\end{proposition}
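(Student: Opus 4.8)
The plan is to identify the embedding $\hat f$ explicitly, reduce the computation of $\hat f^*\Theta$ to a single composite of homomorphisms, and then recognise that composite as the dual polarisation by means of the projection formula.

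First I would identify $\hat f\colon\hat A\emb JC$ with the pullback map on degree-zero line bundles. By construction $\hat f=\phi_\Theta^{-1}\circ\widehat f$, where $\phi_\Theta\colon JC\isoto\widehat{JC}$ is the principal polarisation and $\widehat f\colon\hat A=\Pic^0(A)\to\Pic^0(JC)=\widehat{JC}$ is $\Pic^0(f)$. Since the principal polarisation of a Jacobian is, up to sign, the canonical autoduality realised by $\alpha_O^*\colon\Pic^0(JC)\isoto\Pic^0(C)=JC$, we obtain $\hat f=\alpha_O^*\circ\Pic^0(f)=\Pic^0(f\circ\alpha_O)=\Pic^0(f_C)=f_C^*$. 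Thus $\hat f$ is simply $f_C^*$. I would also record the dual description of $f$ itself: on points of $JC=\Pic^0(C)$ it is the Albanese map $\cO_C(D)\mapsto\sum_P(\ord_P D)\,f_C(P)\in A$ extending $f_C$.

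Second, I would compute the class $\hat f^*\Theta$ through its associated homomorphism. By functoriality $\phi_{\hat f^*\Theta}=\widehat{\hat f}\circ\phi_\Theta\circ\hat f\colon\hat A\to\widehat{\hat A}=A$. Because $\phi_\Theta$ is symmetric and $\widehat{\widehat f}=f$ under biduality, we get $\widehat{\hat f}=\widehat{\widehat f}\circ\widehat{\phi_\Theta^{-1}}=f\circ\phi_\Theta^{-1}$, whence the clean identity $\phi_{\hat f^*\Theta}=f\circ\hat f$. So proving the proposition amounts to showing that $f\circ f_C^*\colon\hat A\to A$ is exactly the dual polarisation $\phi_{\hat H}$ of $H=c_1(\cO_A(f_C(C)))$, in the sense of \cite[Prop. 14.1.1]{BL}.

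Third, and this is the crux, I would identify $f\circ f_C^*$ with the dual polarisation using the projection formula. On first cohomology the composite $\iota_*\iota^*$ (pull back along $\iota=f_C$, then push forward) is cup product with $[f_C(C)]=H$. Matching this with the homological description of $f\circ f_C^*$, and using that Poincaré duality on $C$ is precisely the unimodular form $E_\Theta$ of the principal polarisation, the map $f\circ f_C^*$ becomes the homomorphism $\hat A=\Pic^0(A)\to A$ determined by $H$, which is $\phi_{\hat H}$. This gives $\hat f^*\Theta\equiv\widehat{\cO(f_C(C))}$ in $\NS(\hat A)$. The main obstacle is exactly this last identification at the level of normalisation: the projection formula pins $f\circ f_C^*$ down only up to the elementary divisors of $H$, so I must verify that no spurious integer factor is introduced and that the induced type agrees with that of the dual polarisation. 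Here the unimodularity of $E_\Theta$ on $C$ is essential, as it guarantees that the $C$-side of the computation contributes no extra factor; a comparison of elementary divisors then upgrades the equality from an isogeny to an equality of polarisation classes.
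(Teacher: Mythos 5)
Your argument is correct in outline, but it takes a genuinely different route from the paper, whose entire proof is the single remark that the statement is a direct application of Proposition~\ref{prop:indpol} (quoted from \cite[Proposition~4.3]{BL2}): since $f_C$ is an embedding, $(f_C)_*[C]$ is the fundamental class of $f_C(C)$, i.e.\ $c_1(\cO_A(f_C(C)))=H$, so condition~(2) of that proposition holds and condition~(1) is exactly the claim. What you have written is, in effect, a self-contained proof of that cited result in the case at hand. Your first two steps are standard and correct: the identification $\hat f=\pm f_C^*$ (note the sign, since $\alpha_O^*=-\phi_\Theta^{-1}$ in the usual convention --- harmless in $\NS(\hat A)$ because the two occurrences of $\hat f$ in $\widehat{\hat f}\circ\phi_\Theta\circ\hat f$ cancel it), and the reduction $\phi_{\hat f^*\Theta}=f\circ\hat f$. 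Your third step is where all the content sits, and you rightly flag that it is only sketched: the projection formula gives $f\circ\hat f$ as the map induced by cup product with $[f_C(C)]=H$, and one must then match this against the normalisation of $\hat H$ in \cite[Prop.~14.1.1]{BL} (characterised by $\phi_{\hat H}\circ\phi_H=e(H)\id_A$). Your proposed fix --- unimodularity of $E_\Theta$ on the Jacobian side plus a comparison of elementary divisors to rule out a spurious integer multiple --- does close that gap, since two proportional polarisations of the same type are equal. The trade-off between the two approaches is clear: the paper's citation is shorter and hides the computation, while your version makes visible exactly where the hypothesis that $f_C$ is an embedding enters (namely through $(f_C)_*[C]=H$); to be a complete substitute for the citation, your step three would need to be written out in full rather than gestured at.
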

\begin{proof}
This is a direct application of the following proposition.
\end{proof}
\begin{proposition}\cite[Proposition~4.3]{BL2}\label{prop:indpol}
Let $C$ be a smooth curve and $(JC,\Theta)$ its Jacobian. Let $(A,H)$
be a polarised abelian surface and suppose $f_C\colon C\to A$ is a
morphism and $f\colon JC\To A$ is the canonical homomorphism defined by the
universal property. Then the following are equivalent:
\begin{enumerate}
\item $\hat{f}^*\Theta\equiv \hat{H}$;
\item $(f_C)_*[C]=H$ in $H^2(A,\ZZ)$.
\end{enumerate}
\end{proposition}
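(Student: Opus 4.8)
The plan is to translate both conditions into a single equation between homomorphisms read on first homology, where each object involved (the polarisations $H$, $\hat H$, $\Theta$, the class $(f_C)_*[C]$, and the map $f$) is recorded by an integral alternating form or an integral linear map. Write $\Lambda_A=H_1(A,\ZZ)$ and $\Lambda_J=H_1(JC,\ZZ)$, and let $F=f_*\colon\Lambda_J\To\Lambda_A$ be induced by $f$. Let $\lambda_\Theta\colon\Lambda_J\To\Lambda_J^*$ and $\lambda_H\colon\Lambda_A\To\Lambda_A^*$ be the maps given by the alternating forms of $\Theta$ and $H$; since $\Theta$ is principal, $\lambda_\Theta$ is an isomorphism. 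The factorisation $f_C=f\circ\alpha_O$ from diagram~\eqref{Diag1} lets me replace $f_C$ by $f$ throughout, at the cost of carrying the class of the Abel--Jacobi image $\alpha_O(C)$ in $JC$.

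First I would treat condition~(1). Using the self-duality of $JC$ furnished by $\phi_\Theta$ together with the standard identity $\phi_{u^*L}=\hat u\circ\phi_L\circ u$ for a homomorphism $u\colon X\To Y$ and a line bundle $L$ on $Y$, one obtains
\[
\phi_{\hat f^*\Theta}=f\circ\phi_\Theta^{-1}\circ\hat f\colon\hat A\To\widehat{\hat A}=A,
\]
whose effect on $H_1$ is the alternating map $F\circ\lambda_\Theta^{-1}\circ{}^tF\colon\Lambda_A^*\To\Lambda_A$ (here $\Lambda_{\hat A}=\Lambda_A^*$, and $\hat f$ induces ${}^tF$). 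Thus condition~(1) is equivalent to the equality $F\circ\lambda_\Theta^{-1}\circ{}^tF=\phi_{\hat H}$ of homomorphisms $\hat A\To A$, read on homology.

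Next I would treat condition~(2). The Abel--Jacobi image carries the minimal class $(\alpha_O)_*[C]=\Theta^{g-1}/(g-1)!$, which in $H_2(JC,\ZZ)=\Wedge^2\Lambda_J$ is exactly the bivector dual to the unimodular form $E_\Theta$, that is, the element corresponding to $\lambda_\Theta^{-1}$. Pushing forward, $(f_C)_*[C]=(\Wedge^2F)\big((\alpha_O)_*[C]\big)$ corresponds to the very same map $F\circ\lambda_\Theta^{-1}\circ{}^tF$. Hence, once $(f_C)_*[C]\in H_2(A,\ZZ)$ is compared with $H\in H^2(A,\ZZ)$ through Poincar\'e duality on $A$, condition~(2) becomes $F\circ\lambda_\Theta^{-1}\circ{}^tF=\mathrm{PD}(H)$, again as a map $\Lambda_A^*\To\Lambda_A$.

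Comparing the two, the proof reduces to the identity $\mathrm{PD}(H)=\phi_{\hat H}$ on $H_1$: the Poincar\'e dual of the polarisation class $H$ is precisely the homomorphism of the dual polarisation. This is where the hypothesis $\dim A=2$ is essential, since only then does Poincar\'e duality send $H^2(A,\ZZ)$ into $H_2(A,\ZZ)=\Wedge^2\Lambda_A$, so that the degrees match; the identity itself expresses the defining property of $\hat H$ in \cite[Prop.~14.1.1]{BL}. I expect the main obstacle to be exactly this normalisation: one must verify that $\mathrm{PD}(H)$ equals $\phi_{\hat H}$ on the nose, pinning down the integer factor $\chi(H)$ and the signs from the cap product, while keeping the self-duality identification $\widehat{JC}\cong JC$ and the dual map $\hat f$ consistent throughout. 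For a polarisation of type $(1,d)$---the case relevant here---the exponent of $K(H)$ coincides with $\chi(H)$, so the two standard normalisations of $\hat H$ agree and the identification is clean.
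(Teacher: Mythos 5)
The paper does not prove this statement at all: it is quoted directly from Birkenhake--Lange \cite[Proposition~4.3]{BL2}, so there is no internal argument to compare yours against. Your homological translation is correct and is essentially the standard proof. Reading condition~(1) as $F\circ\lambda_\Theta^{-1}\circ{}^tF=\phi_{\hat H}$ in $\Hom(\Lambda_A^*,\Lambda_A)$ via $\phi_{\hat f^*\Theta}=f\circ\phi_\Theta^{-1}\circ f^\vee$ is right (note that $\hat f$ here already incorporates the identification $\widehat{JC}\cong JC$ by $\phi_\Theta$, which is how the paper's Lemma~\ref{lem:kerf} sets it up), and reading condition~(2) via Poincar\'e's formula $(\alpha_O)_*[C]\leftrightarrow\lambda_\Theta^{-1}$ and the functoriality $(f_C)_*=f_*\circ(\alpha_O)_*$ is also right. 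The one step you flag but do not carry out does check: in a symplectic basis with $E_H=\lambda_1^*\wedge\mu_1^*+d\,\lambda_2^*\wedge\mu_2^*$ and fundamental class $\lambda_1\wedge\mu_1\wedge\lambda_2\wedge\mu_2$ (correctly oriented since $\int_A H^2/2=d>0$), one computes $\mathrm{PD}(E_H)=d\,\lambda_1\wedge\mu_1+\lambda_2\wedge\mu_2=e(H)\,\lambda_H^{-1}$, which is exactly $\phi_{\hat H}$ on lattices by the defining property of the dual polarisation in \cite[Prop.~14.1.1]{BL}; any residual sign ambiguity is removed because both sides are positive (polarisations). So your reduction is complete, and your closing remark about the exponent versus $\chi(H)$ for type $(1,d)$ is the correct reason the normalisation is clean in the case the paper actually uses.
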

\begin{corollary}\label{cor:genus2}
If $g(C)=2$, then $f$ is an isomorphism, so $(A,
\cO(f(C)))=(JC,\Theta)$ is the Jacobian of $C$.
\end{corollary}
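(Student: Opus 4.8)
The plan is to combine Lemma~\ref{lem:kerf} with Proposition~\ref{prop:indpol} and then use a dimension count. When $g(C)=2$, the abelian subvariety $K^0=\ker(f)$ has dimension $g-2=0$, so by Lemma~\ref{lem:kerf} the kernel of $f$ is connected of dimension~$0$, hence trivial. Thus $f\colon JC\To A$ is an isogeny; since $\dim JC=\dim A=2$, the surjection $f$ with trivial kernel is in fact an isomorphism of abelian varieties.

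It remains to identify the polarisations. First I would note that for a smooth genus~$2$ curve embedded in an abelian surface, the class $(f_C)_*[C]$ is a principal polarisation: indeed $g=1+d_1d_2$ forces $d_1d_2=1$, so $\cO_A(f_C(C))$ is of type $(1,1)$, and by Proposition~\ref{prop:indpol} this is equivalent to $\hat{f}^*\Theta\equiv\hat H$. Since $f$ is now an isomorphism, so is $\hat f\colon\hat A\emb JC$, and $\hat f^*$ identifies $\NS(JC)$ with $\NS(\hat A)$. Because $H$ is a principal polarisation, $\hat H$ corresponds to $H$ under the canonical identification $A\cong\hat A$ induced by~$H$, so transporting $\Theta$ back along the isomorphisms yields exactly the class~$H=c_1(\cO_A(f_C(C)))$. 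Hence $(A,\cO_A(f_C(C)))$ and $(JC,\Theta)$ agree as polarised abelian surfaces.

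The main obstacle is purely bookkeeping: making the chain of identifications $A\cong\hat A\cong JC$ compatible with the polarisations, so that one genuinely recovers the \emph{principal} polarisation $\Theta$ and not merely an isogenous or numerically equivalent class. The geometric content (connected kernel, dimension count, surjectivity) is already supplied by Lemma~\ref{lem:kerf}, and the polarisation matching is exactly what Proposition~\ref{prop:indpol} was set up to deliver; the only care needed is to check that a principal polarisation is self-dual under $H\colon A\isoto\hat A$, which is standard. Once that is in place, the conclusion that $(A,\cO(f(C)))=(JC,\Theta)$ is the Jacobian of~$C$ follows immediately, recovering the classical fact that a genus~$2$ curve embeds only in its own Jacobian.
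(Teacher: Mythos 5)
Your proof is correct and follows exactly the route the paper intends (the paper states this corollary without proof, as an immediate consequence of Lemma~\ref{lem:kerf} giving a connected zero-dimensional, hence trivial, kernel and Proposition~\ref{prop:indpol} matching the polarisations). The extra care you take over the self-duality of a principal polarisation under $\phi_H\colon A\isoto\hat A$ is the right point to check and is standard, as you say.
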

Proposition \ref{prop:indpol} allows us to invert the construction in
the following way.
\begin{proposition}\label{prop:recover}
Let $C$ be a smooth curve of genus $g$ whose Jacobian contains an
abelian surface, denoted $\hat{A}$. If $\Theta|_{\hat A}$ is of type
$(d_1,d_2)$ and $d_1d_2=g-1$, then $C$ can be embedded in the dual
surface $A=\hat{\hat A}$ and we recover Diagram~\eqref{Diag1}.
\end{proposition}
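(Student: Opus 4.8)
The plan is to invert the construction of Lemma~\ref{lem:kerf}, reading Diagram~\eqref{Diag1} from right to left. We are given an inclusion $i\colon\hat A\emb JC$, and since $\Theta$ is a principal polarisation the map $\phi_\Theta\colon JC\isoto\widehat{JC}$ is an isomorphism. First I would dualise $i$ and set $f=\hat{i}\circ\phi_\Theta\colon JC\To\hat{\hat A}=A$. As the dual of an injective homomorphism of abelian varieties $\hat{i}$ is surjective, so $f$ is surjective; and because $\phi_\Theta$ is an isomorphism, $\ker f\cong\ker\hat{i}=\widehat{JC/\hat A}$ is connected of dimension $g-2$, namely the complementary abelian subvariety of $\hat A$ in $(JC,\Theta)$. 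Using biduality and the identification $\widehat{JC}\cong JC$ afforded by $\phi_\Theta$, the dual $\hat f\colon\hat A\To JC$ is identified with $i$, so $\hat f^*\Theta=i^*\Theta=\Theta|_{\hat A}$. (As a check, $f\circ i=\hat{i}\circ\phi_\Theta\circ i=\phi_{\Theta|_{\hat A}}$ is the isogeny $\hat A\To A$ attached to the induced polarisation.)

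Next I would pin down the polarisation on $A$ and appeal to Proposition~\ref{prop:indpol}. Let $H=\widehat{\Theta|_{\hat A}}$ be the dual polarisation on $A=\hat{\hat A}$; by biduality $\hat H=\Theta|_{\hat A}$, and since the dual polarisation on an abelian surface again has type $(d_1,d_2)$ we get $H^2=2d_1d_2=2(g-1)$. For any choice of base point $O\in C$ put $f_C=f\circ\alpha_O\colon C\To A$, so that $f_C(O)=0$; by the universal property of the Jacobian, $f$ is precisely the canonical homomorphism attached to $f_C$ (different choices of $O$ producing translates). The hypotheses of Proposition~\ref{prop:indpol} are thus in force, and its condition~(1), namely $\hat f^*\Theta\equiv\hat H$, holds by the first paragraph. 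The proposition then delivers condition~(2): $(f_C)_*[C]=H$ in $H^2(A,\ZZ)$.

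The main obstacle is to promote the morphism $f_C$ to an embedding; here the injectivity argument of Lemma~\ref{lem:kerf} is not available, as it assumed from the outset that $C$ was embedded in $A$. Instead I would argue from the class just computed. Writing $D=f_C(C)$ and $n=\deg(f_C\colon C\To D)$, we have $n[D]=(f_C)_*[C]=H$; since $H$ is primitive in $\NS(A)$ (as for the types that occur, in particular $(1,3)$ when $g=4$), this forces $n=1$, so $f_C\colon C\To D$ is birational. Because $C$ is smooth it factors through the normalisation $\widetilde D\To D$, and the resulting birational morphism $C\To\widetilde D$ of smooth projective curves is an isomorphism; in particular $g(\widetilde D)=g$. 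Finally $p_a(D)=1+\tfrac12 H^2=1+(g-1)=g=g(\widetilde D)$, so $D$ has no singularities and $f_C$ is a closed immersion. This embeds $C$ in $A$ and, together with $f$ and $\alpha_O$, reconstitutes Diagram~\eqref{Diag1}, with $\ker f$ in the role of $K^0$. I expect this primitivity/degree step to be the crux, since it is the only point at which the hypothesis $d_1d_2=g-1$ engages the geometry of $C$ rather than mere numerics.
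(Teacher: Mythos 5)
Your argument follows the paper's proof essentially step for step: dualise the inclusion $\hat A\emb JC$, compose with the Abel--Jacobi map to get $f_C$, invoke Proposition~\ref{prop:indpol} to identify the class of the image, and conclude from the arithmetic genus computation $p_a(f_C(C))=1+d_1d_2=g$ that $f_C$ is an isomorphism onto its image. The only difference is that you make the degree-one step explicit (via primitivity of $H$, valid when $d_1=1$, which covers the types arising in the paper's applications), a point the paper's own proof passes over silently; this is added care rather than a genuinely different route.
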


\begin{proof}
Let $\hat{f}\colon\hat{A}\To JC$ be the inclusion. We can dualise it
to get a map $f\colon JC\To A$. Choose a point $O\in C$ and consider
the Abel-Jacobi map $\alpha_O$. Then $f_C=f|_{\alpha_O(C)}\colon C\to
A$ is a morphism and by construction and
Proposition~\ref{prop:indpol}, $c_1(\cO_A(f_C(C)))$ has to be of type
$(d_1, d_2)$. As the arithmetic genus of the image equals $1+d_1d_2=g$
we get that $f_C$ has to be an isomorphism onto its image, and
therefore an embedding.
\end{proof}
As in \cite{Bor} we denote by $\Is^g_D$ the locus in $\cA_g$ of
principally polarised abelian varieties of dimension~$g$ containing an
abelian subvariety of dimension $k$ on which the restricted
polarisation has type $D=(d_1,\ldots, d_k)$. The restriction
$k\leq\frac{g}{2}$, which is imposed in \cite{Bor} to exclude empty
cases, is not needed here, but note that $\Is^3_{(1,2)}=\Is^3_{(2 )}$
because the complementary abelian subvariety to a $(1,2)$-polarised
surface is an elliptic curve with restricted polarisation of
type~$(2)$.

\begin{theorem}
For every $g\geq 3$ and any $D=(d_1,d_2)$ such that $g=1+d_1d_2$, there
is a $(g+1)$-dimensional family of non-isomorphic smooth curves of
genus~$g$ that have non-simple Jacobians belonging to
$\Is^g_{(d_1,d_2)}$.
\end{theorem}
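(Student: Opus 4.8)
The plan is to realise the family explicitly as curves in linear systems on $(d_1,d_2)$-polarised abelian surfaces and then to use Proposition~\ref{prop:recover} to see that distinct members are generically non-isomorphic. Write $g-1=d_1d_2$ with $d_1\mid d_2$. First I would work over the $3$-dimensional moduli space of $(d_1,d_2)$-polarised abelian surfaces $(A,H)$. For a polarising bundle $L$ with $c_1(L)=H$, Riemann--Roch on the surface gives $h^0(L)=d_1d_2=g-1$, so $|L|\cong\PP^{g-2}$; since a divisor $D$ with $c_1(\cO(D))=H$ has $D^2=2d_1d_2$ and $K_A=0$, its arithmetic genus is $1+d_1d_2=g$, and a general member $C$ is smooth. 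The effective divisors in the class $H$ thus form, for fixed $(A,H)$, a space of dimension $g$ (a $\PP^{g-2}$-bundle over $\Pic^{H}(A)\cong\hat A$), on which translation by $A$ acts with finite stabiliser; so curves up to translation contribute $g-2$ parameters and, together with the $3$ moduli of $(A,H)$, give a family of dimension $3+(g-2)=g+1$.

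For each such $C$ the Jacobian has the required shape. As $g\ge 3$, the inclusion $f_C\colon C\emb A$ is an embedding of a smooth curve of genus $>2$, so Lemma~\ref{lem:kerf} gives the exact sequence $0\to K^0\to JC\to A\to 0$ with $\dim K^0=g-2\ge 1$ together with an embedding $\hat f\colon\hat A\emb JC$ of a surface; hence $JC$ is non-simple. By Proposition~\ref{prop:dualtheta} the restricted polarisation satisfies $\hat f^*\Theta\equiv\hat H$, and for an abelian surface the dual polarisation $\hat H$ again has type $(d_1,d_2)$; therefore $\hat A\subset JC$ carries a restricted polarisation of type $(d_1,d_2)$, that is, $JC\in\Is^g_{(d_1,d_2)}$.

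It remains to see that the general members of this $(g+1)$-parameter family are pairwise non-isomorphic, equivalently that the resulting map to $\cM_g$ is generically finite. Here I would run the construction backwards: by Proposition~\ref{prop:recover}, any embedding of a curve $C$ into a $(d_1,d_2)$-polarised surface of this type arises from an abelian subvariety $\hat A\subset JC$ on which $\Theta$ restricts with type $(d_1,d_2)$, the surface being its dual and the embedding determined up to translation and the choice of base point. Thus only finitely many members of the family (taken up to translation) can give a fixed isomorphism class $[C]$, the map to $\cM_g$ is generically finite, and the image has dimension $g+1$. The hard part is exactly this finiteness step --- bounding how many such surfaces a general member can lie in --- which amounts to the finiteness of the abelian subvarieties of $JC$ of a fixed dimension and polarisation type; I expect to settle it by the rigidity of abelian subvarieties in a family together with the genericity of $JC$ in $\Is^g_{(d_1,d_2)}$, ruling out any positive-dimensional family of distinct surfaces that produces isomorphic curves.
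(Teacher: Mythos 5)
Your argument is essentially the paper's own proof: the same dimension count ($\dim|L|=d_1d_2-1=g-2$ plus the $3$ moduli of $(A,H)$ gives $g+1$), and the same finiteness-of-embeddings step via Proposition~\ref{prop:recover} to ensure the map to moduli of curves is generically finite. The one point you flag as unresolved --- that $JC$ contains only finitely many abelian surfaces on which $\Theta$ restricts to a polarisation of type $(d_1,d_2)$ --- is exactly what the paper also invokes without further justification; it is a standard consequence of the correspondence between abelian subvarieties and symmetric idempotents in $\End_{\QQ}(JC)$ together with the positivity of the Rosati form, which leaves only finitely many lattice points of the bounded norm determined by the fixed restricted type.
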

\begin{proof}
For a general $(d_1, d_2)$-polarised surface $(A,H)$, we have
$h^0(A, L)=d_1d_2$ and the zero set of a general section is a smooth
curve of genus $g=1+d_1d_2$. The moduli space $\cA_D$ of
$(d_1,d_2)$-polarised abelian surfaces is
of dimension~$3$. We claim that a given genus~$g$ curve has only
finitely many embeddings of this kind, up to translation. More
precisely, if $O\in C$ is a base point, the set
$$
\Phi_D=\left\{f_C\colon (C,O)\to (B,0)\mid f_c\,\text{an
  embedding},\ (B,H_B)=(B,[\cO(f(C))])\in \cA_D\right\}
$$
is finite. This follows because Propositions~\ref{prop:indpol}
and~\ref{prop:recover} give a bijection between $\Phi_D$ and
$$
\Psi_D=\left\{\hat{f}\in\Hom(\hat{B},JC)\mid
\hat{f}\,\text{injective},\ \hat{f}^*\Theta=\hat{H}_B\right\}.
$$
But $JC$ contains only finitely many abelian surfaces such that
$\Theta$ restricts to a polarisation of type~$(d_1,d_2)$, and
$\Aut(\hat{B})$ is also finite, so $\Psi_D$ is finite.

Hence the family of curves that arises in this way
is of dimension $d_1d_2-1+3=g+1$.
\end{proof}

In genus~$3$, we have recovered a result by W.~Barth
\cite[Prop~1.8]{B}.
\begin{corollary}\label{cor:barthbiellgenus3}
The following conditions on a smooth genus~$3$ curve $C$ are
equivalent:
\begin{itemize}
\item $C$ can be embedded in a $(1,2)$-polarised abelian surface
\item $JC\in\Is^3_{(2)}=\Is^3_{(1,2)}$
\item $C$ is bielliptic, i.e.\ a double cover of an elliptic curve
  branched in four points. 
\end{itemize}
The family of such curves is irreducible of dimension~$4$.
\end{corollary}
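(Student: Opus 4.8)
The plan is to prove the three conditions equivalent in two halves: the equivalence (1)$\Leftrightarrow$(2) is essentially a restatement of the machinery of this subsection, while (2)$\Leftrightarrow$(3) is the classical dictionary between degree-$2$ elliptic subcovers of $C$ and elliptic curves in $JC$ carrying the restricted polarisation $\Theta$ of type $(2)$. The dimension and irreducibility of the family I would then read off from the preceding theorem together with the parametrisation by pencils.

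For (1)$\Rightarrow$(2): if $f_C\colon C\emb A$ with $(A,H)$ of type $(1,2)$, then since $g=3$ Lemma~\ref{lem:kerf} gives $0\to K^0\to JC\to A\to 0$ with $\dim K^0=g-2=1$, so $K^0$ is an elliptic curve and dualising embeds $\hat A$ in $JC$. By Proposition~\ref{prop:dualtheta}, $\hat f^*\Theta\equiv\widehat{\cO(f(C))}$, i.e.\ $\Theta|_{\hat A}$ is the dual of a type-$(1,2)$ polarisation; since the dual of $(1,2)$ on a surface is again $(1,2)$, this places $JC$ in $\Is^3_{(1,2)}$. For (2)$\Rightarrow$(1) I would apply Proposition~\ref{prop:recover} with $d_1d_2=2=g-1$, which embeds $C$ in the dual $(1,2)$-surface and recovers Diagram~\eqref{Diag1}.

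For (2)$\Leftrightarrow$(3) I would use the identity $\Is^3_{(1,2)}=\Is^3_{(2)}$ noted in the text, so that (2) is equivalent to the existence of an elliptic curve $E\subset JC$ with $\Theta|_E$ of type $(2)$ and complementary surface $S=\hat A$ with $\Theta|_S$ of type $(1,2)$. Given such $S$, I would compose the Abel--Jacobi map with the quotient $q\colon JC\to E_1:=JC/S$ (an elliptic curve, its fibres genuine translates of $S$); the composite $q\circ\alpha_O\colon C\to E_1$ is non-constant because $C$ generates $JC$, hence surjective, and its degree is the intersection number $[\alpha_O(C)]\cdot[S]$. Using the Poincar\'e formula $[\alpha_O(C)]=\Theta^{2}/2$ and $\Theta^2\cdot[S]=(\Theta|_S)^2=2\cdot 1\cdot 2=4$ for a type-$(1,2)$ polarisation on a surface, this degree equals $\tfrac12\cdot 4=2$, so $C$ is a double cover of $E_1$ and Riemann--Hurwitz forces exactly $2g(C)-2=4$ branch points, giving (3). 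Conversely, a degree-$2$ cover $\pi\colon C\to E$ of an elliptic curve is ramified, so $\pi^*\colon E=JE\emb JC$ is injective and pulls the principal polarisation back to $\deg(\pi)=2$ times the principal one, i.e.\ $\Theta|_{\pi^*E}$ has type $(2)$; thus $JC\in\Is^3_{(2)}=\Is^3_{(1,2)}$, which is (2).

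Finally, the preceding theorem (with $g=3$, $D=(1,2)$) already yields a $(g+1)=4$-dimensional family, so it remains to argue irreducibility: the curves arise as the members of the pencils $\PP H^0(A,L)\cong\PP^1$ over the irreducible $3$-dimensional moduli space $\cA_{(1,2)}$, so the total parametrising space is irreducible, and the induced map to $\cM_3$ is generically finite because the set $\Phi_{(1,2)}$ of embeddings of a fixed curve is finite. Hence the locus of such curves is irreducible of dimension~$4$. I expect the one genuinely technical point to be the degree computation in (2)$\Rightarrow$(3): one must quotient by $S$ rather than project onto the isogenous $E$, so that the fibres are honest translates of $S$ and the intersection number $[\alpha_O(C)]\cdot[S]$ computes the covering degree exactly; everything else is a direct application of the cited propositions or a standard fact about pullbacks along double covers.
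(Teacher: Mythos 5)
Your proposal is correct, but it takes a genuinely different (and more self-contained) route than the paper. The paper offers no argument for this corollary at all: it presents the statement as ``recovering'' Barth's Proposition~1.8, so the equivalence with biellipticity is simply cited from \cite{B}, the equivalence of the first two conditions is implicit in Lemma~\ref{lem:kerf}, Proposition~\ref{prop:dualtheta} and Proposition~\ref{prop:recover}, and the dimension count comes from the preceding theorem. You reprove Barth's characterisation directly: your $(2)\Rightarrow(3)$ argument --- compose the Abel--Jacobi map with the quotient $JC\to JC/S$ by the complementary $(1,2)$-surface, compute the covering degree as $[\alpha_O(C)]\cdot[S]=\tfrac12\Theta^2\cdot[S]=\tfrac12(\Theta|_S)^2=2$ via Poincar\'e's formula, and apply Riemann--Hurwitz --- is a clean standard argument, and your converse correctly uses that a ramified double cover $C\to E$ gives an injective $\pi^*\colon E\emb JC$ with $\Theta|_{\pi^*E}$ of type $(2)$. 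The only points you should make explicit are (a) that a general fibre of $q$ meets $\alpha_O(C)$ transversally (generic transversality under translation), so the intersection number really is the covering degree, and (b) in the irreducibility argument, that the general member of the pencil $\PP H^0(A,L)\cong\PP^1$ is smooth and that translating $L$ does not change the isomorphism class of the curve; both are routine. What your approach buys is independence from \cite{B}; what the paper's approach buys is brevity, since everything is delegated to the cited result and the preceding theorem.
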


\begin{remark}\label{rem:hypgen3}
If a genus~$3$ curve $C$ is both bielliptic and hyperelliptic then $C$
has to be an \'etale double cover of a genus~$2$ curve, say $T$: see
\cite[Proposition~5]{Bor}. Then there exists a polarised isogeny
$\pi\colon A\to JT$ such that $C=\pi^{-1}(T)$. The converse also holds:
any \'etale double cover of a genus~$2$ curve is
hyperelliptic and bielliptic \cite[page~346]{Mumfordprym}. In
particular, as the number of non-zero $2$-torsion points in the kernel
of the polarisation on $A$ is three, there are exactly three
hyperelliptic curves in the linear system of a $(1, 2)$-polarising
line bundle on a very general abelian surface \cite[Proposition~6]{Bor}.
\end{remark}

\section{Genus four curves and $(1,3)$ theta divisors}\label{sect:g=4}
Now we focus on genus~$4$ curves. They have special
properties because, firstly, they can be embedded only in $(1,3)$
polarised surfaces and, secondly, the complementary abelian subvariety
is also a $(1,3)$-polarised surface.  Using the additional symmetry,
we have the following theorem.
\begin{theorem}\label{thm:13equiv}
Let $C$ be a smooth genus $4$ curve. The following conditions are equivalent.
\begin{itemize}
\item[(i)] $JC\in\Is^4_{(1,3)}$.
\item[(ii)] $C$ can be embedded in an abelian surface $A$.
\item[(iii)] $C$ has two non-isomorphic embeddings into abelian surfaces $A_1$ and $A_2$.
\item[(iv)] $JC$ contains two complementary abelian surfaces $\hat{A}$ and
  $\hat{B}$ with restricted polarisation of type $(1,3)$.\qed
\end{itemize}
\end{theorem}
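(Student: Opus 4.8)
The plan is to establish all four equivalences by running the cycle (ii)$\Rightarrow$(i)$\Rightarrow$(iv)$\Rightarrow$(iii)$\Rightarrow$(ii), with the genus-$4$ symmetry as the engine: in the exact sequence of Lemma~\ref{lem:kerf} both the kernel $K$ and the image $\hat A$ of $\hat f$ are abelian \emph{surfaces}, and they are complementary abelian subvarieties of the principally polarised $JC$. Thus a single embedding already creates a complementary pair of $(1,3)$-surfaces inside $JC$, while Proposition~\ref{prop:recover} turns any such surface back into an embedding; the theorem is really the observation that these two operations are mutually inverse and that a $(1,3)$-surface never occurs alone.

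First I would dispatch the structural implications. For (ii)$\Rightarrow$(i): given $f_C\colon C\emb A$, Lemma~\ref{lem:kerf} yields $\hat f\colon\hat A\emb JC$ and Proposition~\ref{prop:dualtheta} identifies $\hat f^*\Theta$ with $\widehat{\cO(f(C))}$; since $g=4$ forces $\cO(f(C))$ to have type $(1,3)$ and the dual polarisation of a $(1,3)$-surface is again $(1,3)$, we get $JC\in\Is^4_{(1,3)}$. The implication (iii)$\Rightarrow$(ii) is trivial. For (i)$\Rightarrow$(iv) let $Y\subset JC$ be an abelian surface with $\Theta|_Y$ of type $(1,3)$ and $Y'$ its complement, again of dimension~$2$. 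I would pin down the type of $\Theta|_{Y'}$ by a counting argument rather than a dual-polarisation computation: the finite group $Y\cap Y'$ is isomorphic both to $K(\Theta|_Y)$ and to $K(\Theta|_{Y'})$, so if $\Theta|_{Y'}$ has type $(d_1',d_2')$ then $(d_1'd_2')^2=|Y\cap Y'|=|K(\Theta|_Y)|=9$; as $3$ is prime this forces type $(1,3)$. Because $Y\cap Y'$ is finite the two surfaces are distinct, which is exactly~(iv).

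For (iv)$\Rightarrow$(iii) I would apply Proposition~\ref{prop:recover} to each of the complementary surfaces $\hat A$ and $\hat B$, producing embeddings $f_A\colon C\emb A$ and $f_B\colon C\emb B$ whose induced canonical maps $JC\to A$ and $JC\to B$ have kernels $\hat B$ and $\hat A$ respectively. If these two embeddings were isomorphic, the comparison isomorphism would lift to an element $\sigma\in\Aut(JC,\Theta)$ carrying one kernel to the other, hence interchanging the complementary surfaces $\hat A$ and $\hat B$.

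The hard part is precisely this last point: to conclude that the embeddings are \emph{non}-isomorphic I must exclude an automorphism of the polarised Jacobian that swaps $\hat A$ and $\hat B$. I expect to control this through the bijection between embeddings and injective homomorphisms $\hat A\emb JC$ that underlies Propositions~\ref{prop:indpol} and~\ref{prop:recover}, together with the Torelli description of $\Aut(JC,\Theta)$: the element $-1$ fixes every abelian subvariety, so any swap would have to come from a genuine automorphism of $C$, and ruling this out (or showing that in its presence further $(1,3)$-surfaces appear, so that two non-isomorphic embeddings still exist) is where the real geometric content sits. The remaining steps are then formal consequences of the results already established.
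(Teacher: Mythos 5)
The paper offers no written proof of this theorem (it is stated with a \qed, as an immediate consequence of Section~2), so the real question is whether your argument is complete. Most of it is: (ii)$\Rightarrow$(i) via Lemma~\ref{lem:kerf} and Proposition~\ref{prop:dualtheta}, the triviality of (iii)$\Rightarrow$(ii), and the counting argument for (i)$\Rightarrow$(iv) using $|Y\cap Y'|=|K(\Theta|_Y)|=|K(\Theta|_{Y'})|=9$ are all correct and are exactly the intended ingredients.

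The gap is in (iv)$\Rightarrow$(iii), and it is a gap you half-acknowledge but then misdiagnose. You worry that an isomorphism of the two embeddings would lift to an automorphism of $(JC,\Theta)$ \emph{interchanging} $\hat A$ and $\hat B$, and you propose to exclude this via Torelli and $\Aut(C)$ -- leaving the step unresolved. But that is not what an isomorphism of embeddings produces. Suppose $\psi\colon A\to B$ is a polarised isomorphism with $\varphi_B=\psi\varphi_A$, where $\varphi_A=f_A\alpha_O$ and $\varphi_B=f_B\alpha_O$ are the embeddings produced by Proposition~\ref{prop:recover} from $\hat A$ and $\hat B$. Then $\psi f_A\alpha_O$ and $f_B\alpha_O$ are the same map $C\to B$ sending $O$ to $0$, so by the uniqueness in the universal property of the Jacobian $\psi f_A=f_B$ as homomorphisms $JC\to B$. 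Since $\psi$ is an isomorphism, $\ker(f_A)=\ker(f_B)$, and taking identity components gives $\hat B=\hat A$ as subvarieties of $JC$ (the connected kernel of $f_A$ is the complementary subvariety of $\hat A$, namely $\hat B$, and vice versa). This contradicts the fact that complementary abelian surfaces in a four-dimensional $JC$ intersect in a finite group and hence are distinct. So the two kernels would have to be \emph{equal}, not swapped; no appeal to $\Aut(JC,\Theta)$ or to automorphisms of $C$ is needed, and the "hard part" you flag is in fact a two-line formal consequence of the universal property. With that substitution your cycle of implications closes and the proof is complete.
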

We say that two embeddings $\varphi_i\colon C\to A_i$  are isomorphic
if there is a polarised isomorphism $\psi\colon A_1\to A_2$ such that
$\varphi_2=\psi\varphi_1$. 

There is a well-known family of curves that fulfill
the conditions of Theorem~\ref{thm:13equiv}. We will follow results
from~\cite{R}.  Let $\pi\colon C\To C'$ be an \'etale triple cyclic
cover of a genus~$2$ curve, defined by a $3$-torsion point $\eta\in
JC'$. Choose $\sigma$, a lift to $C$ of the hyperelliptic involution of
$C'$. Then $E=C/\sigma$ is an elliptic curve. Moreover, $A=JC/\eta$ is
embedded in $JC$, and by \cite[Thm.~1]{R}, the Prym surface $P(C/C')$
is $E\times E$ with a polarisation given by
$\cO(E\times\{0\}\cup\{0\}\times E\cup \Delta)$, where $\Delta$ is the
diagonal: see also~\cite{BL2}. Moreover, $A$ and $P(C/C')$ are
complementary to each other and of type~$(1,3)$.

\subsection{Theta divisor}\label{subsect:theta}
Let $A=A_Z$ be an abelian surface with a $(1,3)$ polarisation
$H$, as in Section~\ref{sect:intro}. There is a unique odd section (up
to scalar) of $L_0$, given in terms of classical theta functions by
$\theta_A=\theta[^{-\omega}_{\ 0}]-\theta[^{\omega}_0]$ or, in terms
of canonical theta functions, by $\theta_A=\theta^0_{-\omega}-\theta^0_{\omega}$.

\begin{definition}\label{def:13thetadivisor}
The $(1,3)$ theta divisor of $A$ is the curve $C_A=(\theta_A=0)$ in $A$.
\end{definition}

We made a few choices to define $C_A$, but it is well-defined up to
translation, as the following (essentially \cite[Prop. 4.6.5]{BL}) shows.

\begin{proposition}\label{prop:13thetadivisorunique}
Suppose $L$ is a symmetric polarising line bundle of type $(1,3)$ and
characteristic $c = c_1+c_2$ with respect to some decomposition on an
abelian surface~$A$, so that $K(L)_1 = \{0,\eta,-\eta\}$, for some
$\eta\in A[3]$. Let $h^0_{\pm}$ be the dimensions of the $(\pm
1)$-eigenspaces of the $(-1)$ action on $H^0(L)$. Then
\begin{itemize}
\item if $c$ is even then $h^0_+= 2,\ h^0_-=1$;
\item if $c$ is odd then $h^0_+=1,\ h^0_-=2$.
\end{itemize}
In both cases
$$
\theta_{A,L} := \theta^c_{\eta}-\exp(4\pi
i\Ima(H)(\eta,c_2))\theta^c_{-\eta}
$$ 
generates the $1$-dimensional eigenspace, and so for every
characteristic~$c$ we have
$$
(\theta_A = 0) = t_c^*(\theta_{A,L} = 0).
$$
\end{proposition}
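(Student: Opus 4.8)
The plan is to compute the action of $(-1)^*$ on $H^0(L)$ in the canonical-theta basis $\{\theta^c_0,\theta^c_\eta,\theta^c_{-\eta}\}$ and to read off everything from the resulting matrix. This is a basis because $L$ has type $(1,3)$, so $h^0(L)=3$, and the $K(L)_1$-translates of a single canonical theta function span $H^0(L)$, exactly as recorded for $L_0$ in Section~\ref{subsect:thetafns}. Since $L$ is symmetric, $(-1)^*$ is an involution of $H^0(L)$, hence diagonalisable with eigenvalues in $\{\pm1\}$; thus $h^0_++h^0_-=3$ and $h^0_+-h^0_-=\trace\big((-1)^*\big)$, so the whole statement reduces to finding this trace together with the eigenvectors.

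First I would feed each basis vector into the Inverse Formula. For $\zeta\in K(L)_1$ it gives $(-1)^*\theta^c_\zeta=\exp\!\big(4\pi i\,\Ima H(\zeta+c_1,c_2)\big)\theta^c_{-\zeta-2c_1}$. Because $c$ is a $2$-torsion point we may take $c_1=Za$ with $2a\in\ZZ^2$, so $2c_1\in Z\ZZ^2$ is a lattice vector and the index $-\zeta-2c_1$ reduces to $-\zeta$ in $K(L)_1$. Taking $\zeta=0$ shows $\theta^c_0$ is an eigenvector with eigenvalue $\varepsilon:=\exp\!\big(4\pi i\,\Ima H(c_1,c_2)\big)$; a short computation—in which the $\diag(1,3)$ contributes only integers to the exponent—identifies $\varepsilon$ with $e_*(c_1,c_2)$, so $\varepsilon=+1$ exactly when $c$ is even. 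Splitting the exponent by $\RR$-bilinearity of $\Ima H$, the cases $\zeta=\pm\eta$ give $(-1)^*\theta^c_\eta=\nu\varepsilon\,\theta^c_{-\eta}$ and $(-1)^*\theta^c_{-\eta}=\nu^{-1}\varepsilon\,\theta^c_\eta$, where $\nu:=\exp\!\big(4\pi i\,\Ima H(\eta,c_2)\big)$ is precisely the scalar occurring in the definition of $\theta_{A,L}$.

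With these three relations in hand the eigenvectors can be exhibited by inspection: both $\theta^c_0$ and $\theta^c_\eta+\nu\theta^c_{-\eta}$ have eigenvalue $\varepsilon$, while $\theta_{A,L}=\theta^c_\eta-\nu\theta^c_{-\eta}$ satisfies $(-1)^*\theta_{A,L}=-\varepsilon\,\theta_{A,L}$. Hence $(-1)^*$ has the $2$-dimensional $\varepsilon$-eigenspace $\langle\theta^c_0,\,\theta^c_\eta+\nu\theta^c_{-\eta}\rangle$ and the $1$-dimensional $(-\varepsilon)$-eigenspace $\langle\theta_{A,L}\rangle$. When $c$ is even, $\varepsilon=+1$ and $(h^0_+,h^0_-)=(2,1)$ with $\theta_{A,L}$ spanning the odd line; when $c$ is odd, $\varepsilon=-1$ and $(h^0_+,h^0_-)=(1,2)$ with $\theta_{A,L}$ spanning the even line. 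In both cases $\theta_{A,L}$ generates the $1$-dimensional eigenspace.

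Finally, for the displayed equality I would exploit that, spanning a $1$-dimensional eigenspace, $\theta_{A,L}$ has a zero divisor independent of the choices made. The case $L=L_0$ (characteristic $0$, so $\varepsilon=1$ and $\nu=1$) gives $\theta_{A,L_0}=\theta^0_\eta-\theta^0_{-\eta}=-\theta_A$, so $(\theta_{A,L_0}=0)=C_A$, which is the unique \emph{odd} symmetric divisor in $|L_0|$ since $h^0_-=1$ there. For general symmetric $L=t_c^*L_0$ with $c\in A[2]$ we have $t_c^*L=t_{2c}^*L_0=L_0$, so $t_c^*(\theta_{A,L}=0)$ is a symmetric divisor in $|L_0|$; it remains to see it is the odd one. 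The key point is that translation by a $2$-torsion point does not intertwine the two $(-1)$-actions on the nose but only up to the sign $e_*(c_1,c_2)=\varepsilon$, i.e.\ $(-1)^*\circ t_c^*=\varepsilon\,t_c^*\circ(-1)^*$ on sections; since $\theta_{A,L}$ is the $(-\varepsilon)$-eigenvector for $L$, its pullback $t_c^*\theta_{A,L}$ is then the $(-1)$-eigenvector for $L_0$, hence proportional to $\theta_A$, and comparing divisors gives $(\theta_A=0)=t_c^*(\theta_{A,L}=0)$. The step I expect to be the main obstacle is exactly this sign bookkeeping—checking that $2c_1$ drops out of the index, that $\varepsilon=e_*(c_1,c_2)$, that the off-diagonal scalar is the $\nu$ of $\theta_{A,L}$, and above all that the sign obstructing $t_c^*$ from commuting with $(-1)^*$ is again $e_*(c_1,c_2)$; once these are pinned down, the eigenspace count, the generator, and the translation statement follow formally.
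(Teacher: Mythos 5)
Your proposal is correct, and it is essentially the argument behind the result the paper cites: the paper states this proposition without proof as ``essentially [BL, Prop.\ 4.6.5]'', and the proof there proceeds exactly as you do, by applying the Inverse Formula 4.6.4 to the basis $\{\theta^c_0,\theta^c_{\pm\eta}\}$, reading off the eigenvalue $\varepsilon=e_*(c_1,c_2)$ and the eigenvectors $\theta^c_0$, $\theta^c_\eta\pm\nu\theta^c_{-\eta}$, and tracking the translation by the $2$-torsion characteristic. Your sign bookkeeping (that $2c_1$ is a lattice vector, that $\diag(1,3)$ contributes only integers so $\varepsilon=e_*(c_1,c_2)$, and that $t_c^*$ intertwines the two $(-1)$-actions up to $\varepsilon$) all checks out.
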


The following lemma gives us a few basic properties of $C_A$.
\begin{lemma}\label{lem:baspr} 
Let $A$ be a $(1, 3)$-polarised surface and $C_A$ be the $(1,
3)$ theta divisor. Then: 
\begin{itemize}
\item[(i)] $C_A$ is of arithmetic genus $4$.
\item[(ii)] $C_A$ passes through at least ten $2$-torsion points on $A$.
\item[(iii)] If $A$ is a general abelian surface, then $C_A$ is smooth. 
\item[(iv)] If $C_A$ is smooth then it is hyperelliptic.
\item[(v)] For $A = E\times F$ with the product polarisation
  $\cO_E(1)\boxtimes \cO_F(3)$, where $E,\, F$ are elliptic curves, the
  curve $C_A$ is reducible and consists of one copy of~$F$ and three
  copies of~$E$.
\end{itemize}
\end{lemma}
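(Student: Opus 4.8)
The plan is to dispatch the structural parts (i), (ii), (v), (iv) by direct arguments and to treat the genericity statement (iii) last, since it is the only part that is not essentially formal. For (i), note that $C_A$ is an effective divisor in $|L_0|$ with $c_1(L_0)=H$ of type $(1,3)$, so $C_A^2=H^2=2\cdot 1\cdot 3=6$; since the canonical bundle of an abelian surface is trivial, adjunction gives $2p_a(C_A)-2=C_A^2=6$, i.e.\ $p_a(C_A)=4$. For (ii), the section $\theta_A$ is by construction odd, so $(-1)^*\theta_A=-\theta_A$; hence $C_A$ is a symmetric divisor and $\mult_0 C_A$ is odd (an odd power series has only odd-degree terms), so $C_A$ is an odd divisor with $\cO(C_A)=L_0$. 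Proposition~\ref{prop:tenpoints} then gives $\#A[2]_{C_A}^-=2(4+1)=10$, and each such $2$-torsion point has odd, hence strictly positive, multiplicity on $C_A$, so it lies on $C_A$. This produces the ten points.

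For (v), I would use the Künneth decomposition $H^0(A,L_0)=H^0(E,\cO_E(1))\otimes H^0(F,\cO_F(3))$ together with the parities under $(-1)$. On the genus-one factors, $H^0(E,\cO_E(1))$ is one-dimensional and even, while the odd eigenspace of $H^0(F,\cO_F(3))$ is one-dimensional by the genus-one analogue of Proposition~\ref{prop:13thetadivisorunique}. Since the $(-1)$-odd part of a tensor product is $(H^0(E)_+\otimes H^0(F)_-)\oplus(H^0(E)_-\otimes H^0(F)_+)$ and $H^0(E)_-=0$, the unique odd section is $\theta_A=\theta_E\otimes\theta_F^-$. Here $\theta_E$ cuts out a single point $p\in E$ and $\theta_F^-$ cuts out a degree-three divisor $\{q_1,q_2,q_3\}$ on $F$, so reading off the zero locus gives $C_A=(\{p\}\times F)\cup(E\times\{q_1,q_2,q_3\})$: one copy of $F$ and three copies of $E$.

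For (iv), since $C_A$ is symmetric, $(-1)$ restricts to an involution $\iota$ of $C_A$, and $\iota\neq\id$ because the ample $C_A$ generates $A$. Its fixed points are $C_A\cap A[2]$; when $C_A$ is smooth every $2$-torsion point has multiplicity at most one, so the even ones (multiplicity $0$) are off the curve and the fixed points are exactly the ten points of $A[2]_{C_A}^-$ from (ii). A smooth member of $|L_0|$ is connected, since the components of an ample divisor must meet, hence irreducible of genus $4$. Riemann--Hurwitz for $C_A\to C_A/\iota$ then reads $6=2(2g'-2)+10$, forcing $g'=0$; thus $C_A/\iota\cong\PP^1$ and $C_A$ is hyperelliptic with $\iota$ the hyperelliptic involution.

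Part (iii) is where the real work lies, and I expect it to be the main obstacle. Smoothness of $C_A$ is an open condition on the three-dimensional moduli space $\cA_{(1,3)}$, as the singular locus of the family $\{C_A\}$ projects to a closed subset, so it suffices to exhibit a single $A_0$ with $C_{A_0}$ smooth. The natural candidate is a small deformation of the product $E\times F$ of part (v): there $C_A$ is a connected curve with exactly three nodes, one where the $F$-copy meets each $E$-copy, which is consistent with $p_a=4$ since $\sum g_i+\#\{\text{nodes}\}-\#\{\text{components}\}+1=4+3-4+1=4$. I would then argue that moving $A$ off the product inside $\cA_{(1,3)}$ smooths all three nodes simultaneously, producing a smooth genus-$4$ fibre; smoothing all three nodes of an arithmetic-genus-$4$ curve yields a smooth genus-$4$ curve. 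The delicate point is precisely verifying that the odd-theta-divisor family is a genuine smoothing of the nodes rather than preserving them. I expect to settle this either by a direct computation of the gradient of $\theta_A$ on its zero locus at a suitably chosen period matrix $Z$, or by a local deformation analysis at the three nodes of the reducible curve; once one smooth fibre is produced, openness delivers the statement for general $A$.
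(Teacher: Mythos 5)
Parts (i), (ii), (iv) and (v) of your argument are correct and essentially coincide with the paper's proof: adjunction for (i), Proposition~\ref{prop:tenpoints} applied to the odd symmetric divisor for (ii), the K\"unneth/product factorisation $\theta(v_1,v_2)=f(v_1)g(v_2)$ for (v), and the Hurwitz count $6=2(2g'-2)+b$ for (iv). Your version of (iv) is in fact slightly cleaner than the paper's, since you observe that on a \emph{smooth} curve the odd-multiplicity $2$-torsion points have multiplicity exactly~$1$ and the even ones multiplicity~$0$, giving $b=10$ directly rather than deducing it from $b\ge 10$ inside the Hurwitz formula; the connectedness remark you insert is also a point the paper leaves implicit.

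The genuine gap is in (iii), and you have identified it yourself: your reduction of genericity to the existence of a single smooth fibre (via closedness of the image of the singular locus of the universal family) is fine, and the candidate degenerate fibre $3E\cup F$ with its three nodes at $2$-torsion points is correctly described, but the decisive step --- showing that the odd-theta family actually smooths those nodes to first order rather than preserving them --- is only announced (``I expect to settle this either by a direct computation \dots or by a local deformation analysis''), not carried out. Since the three nodes sit at $2$-torsion points and the whole family consists of symmetric divisors, one cannot wave this away: a priori a symmetric family could be forced to stay singular at fixed points of $(-1)$, so the first-order computation (or an argument that the nodes lie at points of $A[2]^+_{\fD}$, where the multiplicity is merely constrained to be even and hence can drop to~$0$) is really needed. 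The paper sidesteps all of this by citing Andreotti--Mayer \cite{AM} (see also \cite{ACGH}), i.e.\ the known fact that the singular theta divisors form a proper closed locus. Your route is more self-contained in spirit and would work if completed, but as written (iii) is not proved; either finish the tangent-cone computation at the three nodes for a diagonal period matrix perturbed off the product locus, or replace this part by the citation the paper uses.
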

\begin{proof}
(i) follows from adjunction and Riemann-Roch, (ii) is a consequence of
  Proposition~\ref{prop:tenpoints}, and (iii) follows from the work of
  Andreotti and Mayer: see \cite[Prop~6]{AM} or \cite[Ch.~6.4]{ACGH}
  for details.
  
For~(iv), the involution $(-1)$ on $A$ restricts to $C_A$ and the
quotient $C'=C_A/\pm 1$ is a smooth curve. The quotient map $C_A\to
C'$ is ramified at the $2$-torsion points of $A$ lying on $C_A$, of
which there are $b\ge 10$ by~(ii). The Hurwitz formula now gives
$$
2g(C_A) - 2 = 2(2g(C') - 2) + b
$$
and since $g(C_A)=4$ and $b \geq 10$, the only possibility is $g(C') =
0$ and $b = 10$.

In the situation of~(v), the matrix $Z$ can be chosen to be diagonal,
so the theta function is of the form $\theta(v_1, v_2) =
f(v_1)g(v_2)$, where $f \in H^0(\cO_E(1))$ and $g\in
H^0(\cO_F(3))$. Therefore, $f$ has exactly one zero and $g$ has three
zeros, which gives the assertion.
\end{proof}

\subsection{Product of elliptic curves}
If $(A,H)$ is a product, we can compute more details. Let
$$
E = \CC/\tau_1\ZZ+\ZZ,\ F = \CC/\tau_2\ZZ+\ZZ,\ 
\Lambda=\left[\begin{array}{cccc} \tau_1& 0& 1& 0\\ 0&\tau_2& 0&
    3\end{array}\right],\ A = \CC^2/\Lambda.
$$ 
Then $A = E\times F$,
with the product polarisation. We can take a standard decomposition
$$
\CC^2 =\left[\begin{array}{cc} \tau_1&
    0\\ 0&\tau_2\end{array}\right]\RR^2 + \RR^2
$$ 
and write theta functions explicitly: for $v=(v_1,v_2)\in\CC^2$ we have
\[
\Th{\pm\omega}{0}(v,Z)
= \sum_{l_1,l_2\in\ZZ} \exp(\pi il_1^2\tau_1
+ \pi i(l_2 \pm\frac{1}{3})^2\tau_2 
+2\pi i l_1v_1
+ 2\pi iv_2(l_2 +\frac{1}{3})).
\]
For the computations, let us denote
\begin{align*}
a_l &= \exp(\pi il^2\tau_1 + 2\pi i v_1l),\\
b_l^\pm &= \exp(\pi i(l\pm\frac{1}{3})^2\tau_2 + 2\pi i v_2(l\pm\frac{1}{3})).
\end{align*}
Then, since the series converge absolutely
$$
\theta_A = \theta[^{\omega}_0]-\theta[^{-\omega}_{\ 0}]
=\sum_{l_1,l_2}a_{l_1}b^+_{l_2}-\sum_{l_1,l_2}a_{l_1}b^-_{l_2}
=\sum_{l_1}a_{l_1}(\sum_{l_2} b^+_{l_2}-\sum_{l_2} b^-_{l_2}).
$$
For $v_1 = \frac{1}{2} + \frac{1}{2}\tau_1$ we have
\begin{align*}
a_l &= \exp(\pi il^2\tau_1+\pi i l\tau_1 + \pi il)\\
    &= \exp(\pi i(l+\frac{1}{2})^2\tau_1-\frac{1}{4}\pi i\tau_1 + \pi
il)\\
    &= (-1)^l\exp(\pi i(l+\frac{1}{2})^2-\frac{1}{4}\pi i\tau_1).
\end{align*}
Now,
$a_l = -a_{-l-1}$, so $\sum_l a_l = 0$ and therefore for any $v_2$ we
find $\theta_A(\frac{1}{2} + \frac{1}{2}\tau_1,v_2)=0$.  The image of
this component of $\{v\mid \theta_A(v)=0\}$ in $A$ is a curve isomorphic to $F$.

Similar computations can be carried out for $v_2=0,\ v_2=\frac{3}{2}$
and $v_2=\frac{1}{2}\tau_2$. For $v_2=0$ and $v_2=\frac{3}{2}$, we
have $b^+_l=b^-_l$.  For $v_2=\frac{1}{2}\tau_2$ we have
$b^+_l=b^-_{-l-1}$. In all three cases we get $\sum_l b_l^+=\sum_l b_l^-$.
The images in $A$ of those zeros are isomorphic to $E$, so by
Lemma~\ref{lem:baspr}, we know we have found all zeros of $\theta_A$.

This gives us some more information about $C_A$ in general.
\begin{proposition}\label{prop:notlowergenus}
For a general $(A,H)$ of type $(1,3)$, the $(1,3)$ theta divisor 
$C_A$ is not a finite cover of a curve of lower positive genus.
\end{proposition}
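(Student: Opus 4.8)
The plan is to assume, for contradiction, that for general $(A,H)$ there is a finite morphism $\phi\colon C_A\to D$ of degree $n\ge 2$ onto a smooth curve $D$ of genus $h=g(D)\ge 1$, and then to eliminate every possibility. Since $C_A$ has genus~$4$, the Hurwitz formula reads $6=n(2h-2)+R$ with $R\ge 0$ and $n\ge 2$, which forces $h\le 2$; the only surviving cases are $(h,n)=(1,n)$ with $n\ge 2$, the ramified double cover $(h,n)=(2,2)$ (with $R=2$), and the \'etale triple cover $(h,n)=(2,3)$ (with $R=0$). In each case $\phi$ induces $\phi^*\colon JD\to JC_A$ with finite kernel, so $JC_A$ contains an abelian subvariety isogenous to $JD$, of dimension $h$.

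First I would dispose of the elliptic case $h=1$. By Lemma~\ref{lem:kerf} and the genus-$4$ discussion, $JC_A$ is isogenous to $\hat A\times K$ with $\hat A$ and $K$ both of type $(1,3)$, and for general $A$ both factors are simple: the non-simple $(1,3)$-polarised surfaces form a countable union of proper subvarieties of $\cA_{(1,3)}$, and $K$ varies like $A$ (in fact $K\cong A$, see Theorem~\ref{thm:dualpair}). Hence $JC_A$ has no elliptic isogeny factor, so it contains no abelian subvariety of dimension~$1$, and $h=1$ (in particular $C_A$ bielliptic) is impossible. Next, the \'etale triple cover $(h,n)=(2,3)$: if $\phi$ is cyclic then $C_A$ is exactly an \'etale triple cyclic cover of a genus~$2$ curve, the family recalled after Theorem~\ref{thm:13equiv}, and every such curve is bielliptic because the hyperelliptic involution of the base lifts to an involution $\sigma$ with $C_A/\sigma$ elliptic; this again produces an elliptic isogeny factor, contradicting the simplicity above. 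The non-cyclic case (Galois closure of group $S_3$) I would eliminate either by exhibiting a forbidden factor in the $S_3$-decomposition of the Jacobian, or by a dimension count: \'etale triple covers of genus~$2$ curves form a $3$-dimensional family whose general member is not hyperelliptic, so its hyperelliptic sublocus has dimension $<3$ and cannot contain the $3$-dimensional family $\{C_A\}$.

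The remaining case $(h,n)=(2,2)$ is the crux. Here $D$ is hyperelliptic, and since the hyperelliptic involution $\iota$ of $C_A$ (the restriction of $(-1)$, acting as $-1$ on $JC_A$) is central, it descends to the hyperelliptic involution of $D$; thus $\phi$ comes from a further involution $\sigma\ne\iota$, so that $\langle\iota,\sigma\rangle\cong(\ZZ/2)^2$ acts on $C_A$ with quotient genera $0,2,2,0$. I expect this to be the main obstacle. It \emph{cannot} be settled by the Jacobian alone: the induced class $\Theta|_{\phi^*(JD)}$ equals $\phi_*\phi^*=[2]$, hence has type $(2,2)$, but $A$ (and so $\hat A$ and $K$) is isogenous to a principally polarised surface via a degree-$3$ isotropic descent, so a $(2,2)$-polarised abelian surface of the required shape is not excluded arithmetically. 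The exclusion must therefore be genuinely geometric, namely the assertion that $\Aut(C_A)=\langle\iota\rangle$ for general $A$.

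My plan for this last point is to specialise to the boundary point $A_0=E\times F$, where the computation above shows that $C_A$ degenerates to the stable curve $F\cup E_1\cup E_2\cup E_3$, a tree of four elliptic curves with $F$ meeting each $E_i$ in one point. An extra involution $\sigma$ present on the general $C_A$ would persist, after stable reduction, as an automorphism of this limit together with a degree-$2$ admissible cover onto a stable genus-$2$ curve; by enumerating the permitted actions on the four components (fixing the $E_i$, or swapping two of them) and testing the smoothability of the resulting admissible cover, I would show that no such configuration deforms to a genuine double cover for general $E$ and $F$, contradicting the existence of $\sigma$ for general $A$. Establishing this smoothability/combinatorial impossibility, and checking that the non-cyclic triple covers are caught by the same net, is the step I expect to require the most care.
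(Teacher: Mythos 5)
Your Hurwitz reduction to the three cases $(h,n)=(1,n)$, $(2,3)$ and $(2,2)$ is correct, and your treatment of the elliptic case and of the cyclic \'etale triple cover is sound: both are reduced to the existence of an elliptic isogeny factor of $JC_A\sim \hat A\times K$, which is excluded for general $A$ because both complementary $(1,3)$-surfaces are simple (the appeal to Theorem~\ref{thm:dualpair} is a forward reference, but a harmless one, since its proof does not use this proposition). The problem is that the proof is not finished. For the non-cyclic triple cover you offer two alternatives without carrying either out, and the dimension count you suggest requires the independent input that the general \'etale triple cover of a genus~$2$ curve is not hyperelliptic, which is uncomfortably close to what is being proved. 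For the branched double cover $(h,n)=(2,2)$ --- which you yourself identify as the crux --- you only describe a programme: specialise to $A_0=E\times F$, classify the possible automorphisms and admissible double covers of stable genus-$2$ curves admitted by the limit $F\cup E_1\cup E_2\cup E_3$, and rule out smoothability. You explicitly defer that analysis, so the hardest case is asserted rather than proved. That is a genuine gap.

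For comparison, the paper's own argument is precisely the specialisation you reach for in the last step, applied uniformly to all cases: if the general $C_A$ were a finite cover of a curve of positive genus, the reducible limit $3E\cup F$ would be a (limit of such a) cover of a possibly singular curve, and the paper disposes of the one case it regards as nontrivial --- the \'etale triple cover of a genus-$2$ curve --- by observing that the three nodes lying on the component $F$ would then have to differ by $3$-torsion of $F$ (being permuted by the $\ZZ/3\ZZ$ deck transformation), whereas the explicit computation shows they are $2$-torsion points. That torsion-point observation at the degenerate fibre is the concrete closing move your plan lacks; an analogous analysis of the limit configuration is what would be needed to finish your $(2,2)$ and non-cyclic $(2,3)$ cases. (It is fair to add that the paper's proof is itself very terse about the cases it declares trivial, so your systematic case division and your clean disposal of the elliptic and cyclic cases are genuinely useful; what is missing is only, but crucially, the execution of the remaining two cases.)
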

\begin{proof}
If not, then $3E\cup F$ would be specialisations and so
they would be covers of possibly singular curves. The only nontrivial
case is to show that $3E\cup F$ is not a triple cover of a genus~$2$
curve. Then the intersection points of $F$ and the copies of $E$ would be
$3$-torsion points on $F$, but they are in fact $2$-torsion points.
\end{proof}

In \cite{BL2}, Ch.~Birkenhake and H.~Lange consider $E\times E$ with
the polarisation given by $E\times\{0\}\,\cup\,\{0\}\times E\,\cup\,
\pm\Delta$, where $\pm\Delta$ is the diagonal or antidiagonal
respectively. They prove that both polarisations are of type $(1,3)$
and that they are dual to each other. As $C_A$ is characterised as the
only symmetric divisor that passes through ten $2$-torsion points with
odd multiplicity, it is easy to see that in this case both of
$E\times\{0\}\,\cup\,\{0\}\times E\,\cup\, \pm\Delta$ are $(1,3)$ theta
divisors (for different polarised surfaces).

\subsection{Jacobian of $C_A$}

\begin{proposition}\label{prop:lemat} 
Let $C$ be a smooth hyperelliptic genus~$4$ curve. Then $JC$ contains
a $(1, 3)$-polarised surface $\hat{A}$ if and only if $C$ can be
embedded into $A$ as the $(1, 3)$ theta divisor.
\end{proposition}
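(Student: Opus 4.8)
The plan is to prove the two implications separately; the forward (``only if'') direction, which uses hyperellipticity in an essential way, is the substantial one.

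The converse is quick. Suppose $C$ is embedded in $A$ as the $(1,3)$ theta divisor $C_A$. Then $\cO_A(f_C(C))=\cO_A(C_A)$ is of type $(1,3)$, so Lemma~\ref{lem:kerf} applies and dualising the resulting exact sequence yields an embedding $\hat f\colon\hat A\emb JC$. By Proposition~\ref{prop:dualtheta}, $\hat f^*\Theta\equiv\widehat{\cO(f(C))}$; since the dual of a $(1,3)$ polarisation on an abelian surface is again of type $(1,3)$, the image $\hat A\subset JC$ carries a restricted polarisation of type $(1,3)$, which is exactly what is claimed.

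For the forward direction, assume $JC$ contains a $(1,3)$-polarised surface $\hat A$, so $\Theta|_{\hat A}$ has type $(1,3)$ with $d_1d_2=3=g-1$. Proposition~\ref{prop:recover} then embeds $C$ into $A=\hat{\hat A}$ and recovers Diagram~\eqref{Diag1}; write $f_C=f\circ\alpha_O$, where $f\colon JC\to A$ is the dual isogeny and $\alpha_O$ is the Abel--Jacobi map. The key is to take the base point $O$ to be a \emph{Weierstrass point}. Writing $\iota$ for the hyperelliptic involution, the relation $P+\iota(P)\sim 2O$ valid at a Weierstrass point gives $\alpha_O(\iota(P))=[\iota(P)-O]=[O-P]=-\alpha_O(P)$, so $\alpha_O\circ\iota=(-1)\circ\alpha_O$. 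As the isogeny $f$ commutes with $(-1)$, we get $f_C\circ\iota=(-1)\circ f_C$, whence $(-1)^*f_C(C)=f_C(\iota(C))=f_C(C)$: the image $C'=f_C(C)$ is a \emph{symmetric} divisor of type $(1,3)$ on $A$. It remains to identify $C'$ with $C_A$. The genus~$4$ hyperelliptic curve has exactly $2g+2=10$ Weierstrass points, all fixed by $\iota$, and $f_C$ carries them to fixed points of $(-1)$, that is to points of $A[2]$; injectivity of $f_C$ makes these ten \emph{distinct}, and smoothness of $C'$ forces multiplicity~$1$ (odd) at each. Hence $\#A[2]^-_{C'}\ge 10$. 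By Proposition~\ref{prop:tenpoints} together with the eigenspace dichotomy of Proposition~\ref{prop:13thetadivisorunique}, the only symmetric divisor of type $(1,3)$ that meets $A[2]$ in ten points of odd multiplicity is the $(1,3)$ theta divisor (equivalently, its defining section spans the distinguished one-dimensional eigenspace singled out there). Therefore $C'=C_A$ up to translation, and $C$ is embedded as the $(1,3)$ theta divisor.

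The main obstacle is the symmetry step: everything hinges on the image $f_C(C)$ being stable under $(-1)$, and this is exactly where hyperellipticity enters, through the identity $P+\iota(P)\sim 2O$ at a Weierstrass point, which forces $\alpha_O$ to intertwine $\iota$ with $(-1)$. Once symmetry is secured, the count of ten $2$-torsion points and the characterisation of $C_A$ in Propositions~\ref{prop:tenpoints} and~\ref{prop:13thetadivisorunique} finish the argument with no further difficulty.
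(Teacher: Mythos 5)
Your proof is correct and follows essentially the same route as the paper: show that the image of $C$ in $A$ is a symmetric divisor via $\alpha_O(\iota(P))=-\alpha_O(P)$, locate ten $2$-torsion points of odd multiplicity on it, and invoke Proposition~\ref{prop:tenpoints} to identify it with the zero locus of the odd section, i.e.\ the $(1,3)$ theta divisor. The only (harmless) difference is that you produce the ten $2$-torsion points directly as images of the Weierstrass points, whereas the paper obtains the same count by applying the Hurwitz formula to the Kummer quotient $\hat{\imath}(C)\to\pi_A(\hat{\imath}(C))\cong\PP^1$; your explicit choice of a Weierstrass base point also makes the symmetry step a little more transparent than in the paper.
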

\begin{proof}
In view of Theorem~\ref{thm:13equiv} and Lemma~\ref{lem:baspr}, it
remains to prove that if $C$ is hyperelliptic with $JC \in
\Is^4_{(1,3)}$ then $C = C_A$ is the $(1, 3)$ theta divisor
of~$A$. For $i\colon\hat{A}\to JC$ the inclusion of a surface with
restricted polarisation of type $(1,3)$, we write $\hat{\imath} \colon
JC \to A$ for the dual map, $\iota$ for the hyperelliptic involution,
and $\alpha$ for the Abel-Jacobi map, and identify $C$ with
$\alpha(C)\subset JC$.  Then $(-1)^*C = C$ is a symmetric curve
because $\alpha(P) = -\alpha(\iota(P))$. Therefore the image
$\hat{\imath}(C)$ is also symmetric in $A$. Note that by
Proposition~\ref{prop:indpol}, $\hat{\imath}(C)$ is isomorphic to $C$.

Let $\pi_{JC} \colon JC \To JC/(-1)$ be the Kummer map. Then
$\pi_{JC}(C)$ is isomorphic to $\PP^1$, because $(-1)|_C = \iota$. Let
$\pi_A$ be the Kummer map of $A$. Because $\hat{\imath}$ is a
homomorphism, it descends to a map $j$ making the diagram
\begin{equation*}
  \begin{diagram}
\label{DDD}
\dgARROWLENGTH=2em
    \node{JC}\arrow[4]{e,t}{\hat{\imath}}\arrow[2]{s,r}{\pi_{JC}}    
\node[4]{A}\arrow[2]{s,r}{\pi_{A}}\\
[2]\node{JC/(-1)}\arrow[4]{e,t}{j}\node[4]{A/(-1)}
  \end{diagram}
\end{equation*}
commute, given by $j(\pm x) = \pm \hat{\imath}(x)$.  Now,
$j(\pi_{JC}(C))$ has to be a rational curve, equal to $\pi_A
(\hat{\imath}(C))$. As $\pi_A$ is a $2$-to-$1$ map, by the Hurwitz
formula it has to be branched in ten points. The only possible branch
points are $2$-torsion points, so $\hat{\imath}(C)$ has to go through
ten $2$-torsion points of $A$. By Proposition~\ref{prop:tenpoints},
$\hat{\imath}(C) \cong C$ is the zero locus of an odd global section,
which finishes the proof.
\end{proof}
From Proposition~\ref{prop:lemat} we get the following.
\begin{theorem}\label{thm:jednahiper}
A general $(1,3)$-polarised surface contains exactly one hyperelliptic
curve up to translation.
\end{theorem}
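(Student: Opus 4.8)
The plan is to read the theorem off Proposition~\ref{prop:lemat}, using Lemma~\ref{lem:baspr} to supply the one hyperelliptic curve that must exist. I would separate the claim into existence (a general $A$ carries at least one hyperelliptic curve) and uniqueness (any two such are translates of one another), and I expect the content to lie entirely in pinning down the uniqueness.

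For existence I would simply take $C=C_A$, the $(1,3)$ theta divisor. For general $A$ it is smooth by Lemma~\ref{lem:baspr}(iii), and a smooth $C_A$ is hyperelliptic by Lemma~\ref{lem:baspr}(iv); moreover Definition~\ref{def:13thetadivisor} together with Proposition~\ref{prop:13thetadivisorunique} shows that $C_A$ is well defined as a single translation class, independent of the choices (of symmetric line bundle and characteristic) made to construct it.

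For uniqueness I would let $C\subset A$ be an arbitrary smooth hyperelliptic curve in the polarising linear system; by adjunction it has genus $g=1+1\cdot 3=4$. Feeding the embedding $C\emb A$ into Lemma~\ref{lem:kerf} yields the exact sequence $0\To K\To JC\To A\To 0$ and, on dualising, an embedding $\hat A\emb JC$; by Proposition~\ref{prop:dualtheta} the polarisation $\Theta$ restricts on $\hat A$ to $\widehat{\cO(f(C))}$, which for a $(1,3)$ class is again of type $(1,3)$ as noted in Section~\ref{sect:intro}. Thus $JC$ contains a $(1,3)$-polarised surface and lies in $\Is^4_{(1,3)}$, so Proposition~\ref{prop:lemat} applies and tells us that $C$ is embedded in $\hat{\hat A}=A$ as the $(1,3)$ theta divisor. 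Combining this with existence, the only hyperelliptic curve in $A$ up to translation is $C_A$.

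The step I expect to be the main obstacle is checking that the embedding extracted from Proposition~\ref{prop:lemat} (the one recovered in Proposition~\ref{prop:recover} from the inclusion $i=\hat f\colon\hat A\emb JC$) coincides, up to translation, with the embedding of $C$ we began with, rather than being merely abstractly isomorphic to it. Here I would trace through the double-duality bookkeeping: the dual map $\hat\imath$ of $i=\hat f$ is $\hat{\hat f}=f$ under the canonical identification $\hat{\hat A}=A$, and composing with the Abel--Jacobi map gives $\hat\imath(\alpha(C))=f(\alpha_O(C))=f_C(C)$. Hence the curve that Proposition~\ref{prop:lemat} identifies as a $(1,3)$ theta divisor is literally the original $C\subset A$ up to the translation built into the choice of base point, which is exactly what upgrades \emph{``$C$ is some $(1,3)$ theta divisor''} to \emph{``$C$ is a translate of $C_A$ inside this $A$.''}
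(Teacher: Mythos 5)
Your proposal is correct and takes essentially the same route as the paper, which deduces the theorem directly from Proposition~\ref{prop:lemat} (with Lemma~\ref{lem:baspr}(iii)--(iv) and Proposition~\ref{prop:13thetadivisorunique} supplying existence and well-definedness of $C_A$, exactly as you use them). The double-duality bookkeeping you single out as the main obstacle is indeed the only point the paper leaves implicit in its one-line deduction, and your resolution of it via $\hat{\hat f}=f$ is the intended one.
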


\begin{remark}\label{rem:BOPY}
The authors of \cite{BOPY} use Gromov-Witten theory of K3 surfaces to
count hyperelliptic curves of arithmetic genus $g$ on a fixed linear
system of a $(1,d)$-polarised abelian surface: see
also~\cite{Ro}. These numbers, called $h^{A,FLS}_{g,\beta}$, are
presented in \cite[Table~1]{BOPY}. The numbers on the diagonal of that
table, where $d=g+1$, are of particular interest because then the
curves have to be smooth. These numbers are non-zero only for
$g=2,\,3,\,4$ and~$5$, and we can explain them as follows.

For $g=2$, the surfaces are principally polarised so the number
$h^{A,FLS}_{g,\beta}=1$ corresponds to the curve embedded in its
Jacobian.

For $g=3$, there are six hyperelliptic curves according
to~\cite{BOPY}, yet by Remark~\ref{rem:hypgen3} there are only three
such curves up to translation. The difference follows from the fact
that we can translate any curve by an element of the kernel of the
polarisation and still stay in the fixed linear system. In this case,
although the kernel has order four, we get only two copies of each
curve in this way, because both copies are invariant under translation
by a $2$-torsion point that defines the polarised isogeny $\pi\colon
A\to JT$ described in Remark~\ref{rem:hypgen3}.

For $g=4$, the order of the kernel of the polarisation is~$9$. By
Proposition \ref{prop:notlowergenus}, we know we get exactly~$9$
translates of the $(1,3)$ theta divisor.

The number for $g=5$ is explained by forthcoming work of the first
author and A.~Ortega~\cite{BoOr}, in which it is shown that such a
curve is unique up to translation. The situation is similar to the
$g=3$ case: the kernel has order~$16$, but hyperelliptic curves are
invariant under a subgroup of order~$4$.
\end{remark}

Now we will prove the main result of this paper.
\begin{theorem}\label{thm:dualpair}
Let $C$ be a smooth hyperelliptic genus~$4$ curve. Then $JC$ contains a
pair of complementary $(1,3)$-polarised surfaces $A$ and $\hat{A}$ if
and only if $C$ can be embedded in $A$ as the $(1,3)$ theta divisor.
\end{theorem}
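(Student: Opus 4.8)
The plan is to derive the theorem from Proposition~\ref{prop:lemat}; the only new content is that the two complementary surfaces form a \emph{dual} pair. The forward implication is immediate: if $JC$ contains complementary $(1,3)$-surfaces $A$ and $\hat A$, then in particular it contains the $(1,3)$-surface $\hat A$, so by Proposition~\ref{prop:lemat} the hyperelliptic curve $C$ embeds into the dual surface $\widehat{\hat A}$ as its $(1,3)$ theta divisor, and $\widehat{\hat A}=A$ because the pair was assumed dual.

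For the converse, suppose $C=C_A$ is the $(1,3)$ theta divisor of $A$. Lemma~\ref{lem:kerf} supplies the exact sequence $0\to K\to JC\xrightarrow{f}A\to 0$ with $K$ connected, together with the embedding $\hat f\colon\hat A\hookrightarrow JC$; thus $K$ and $\hat A$ are complementary abelian subvarieties of $JC$. By Proposition~\ref{prop:dualtheta} the restriction $\Theta|_{\hat A}$ equals $\hat H$, which is of type $(1,3)$. I would next check that $\Theta|_K$ is also of type $(1,3)$: for complementary abelian subvarieties of a principally polarised variety the finite group $\hat A\cap K$ is isomorphic to $K(\Theta|_{\hat A})$ and to $K(\Theta|_K)$ (see \cite{BL}), and for an abelian surface the isomorphism type of $K(L)$ determines the type of $L$; since $K(\hat H)\cong(\ZZ/3)^2$ this gives type $(1,3)$ for $K$ as well.

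It remains to prove $K\cong A$ as polarised surfaces, so that the pair is genuinely $(A,\hat A)$; this is the crux. As $K$ is a $(1,3)$-surface in $JC$, Proposition~\ref{prop:recover} produces a second embedding of $C$ as the $(1,3)$ theta divisor of $B:=\hat K$, and by Theorem~\ref{thm:13equiv}(iii) the surfaces $A$ and $B$ realise the two embeddings of $C$. The decisive step is to show that the hyperelliptic theta divisor is self-dual, that is $C_{\hat A}\cong C_A$: the characterisation of $C_A$ as the unique symmetric divisor meeting ten $2$-torsion points with odd multiplicity (Proposition~\ref{prop:tenpoints}) is preserved under the duality $A\leftrightarrow\hat A$, exactly as the curves $E\times\{0\}\cup\{0\}\times E\cup(\pm\Delta)$ on $E\times E$ illustrate. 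Granting this, $C$ embeds in $\hat A$ as a theta divisor, so by Proposition~\ref{prop:lemat} the dual surface $\widehat{\hat A}=A$ appears inside $JC$ as a $(1,3)$-surface; since $JC$ contains only the complementary pair $\hat A,K$ of such surfaces (Theorem~\ref{thm:13equiv}(iv)) and $A\not\cong\hat A$, this forces $A\cong K$.

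The main obstacle is the self-duality $C_{\hat A}\cong C_A$ of the previous paragraph, and it must use the hyperelliptic hypothesis essentially: for a general genus~$4$ theta divisor the two embedding surfaces of Theorem~\ref{thm:13equiv}(iii) are not dual. I would attempt it by degenerating a general $(A,H)$ to a product $E\times E$, where the explicit involution $(x,y)\mapsto(x,-y)$ interchanges the two theta divisors $E\times\{0\}\cup\{0\}\times E\cup(\pm\Delta)$, and then propagating the isomorphism by the uniqueness of the hyperelliptic curve up to translation (Theorem~\ref{thm:jednahiper}), in the spirit of the degeneration used in Proposition~\ref{prop:notlowergenus}; alternatively one could track the ten odd $2$-torsion points through the polarisation isogeny $\phi_H\colon A\to\hat A$ and the induced map of Kummer surfaces.
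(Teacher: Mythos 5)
Your overall architecture matches the paper's: the forward implication is indeed immediate from Proposition~\ref{prop:lemat}, and for the converse everything reduces, exactly as you say, to the self-duality statement $C_{\hat A}\cong C_A$. But that statement is the entire content of the hard direction, and you have not proved it --- you have only named it as ``the main obstacle'' and sketched two strategies without carrying either out. The first strategy (degenerate to $E\times E$ and ``propagate the isomorphism by uniqueness of the hyperelliptic curve up to translation'') runs into the problem that at the product locus the $(1,3)$ theta divisor degenerates to the reducible, singular curve $3E\cup F$ of Lemma~\ref{lem:baspr}(v), so there is no hyperelliptic curve there whose isomorphism class you can specialise to and then spread out; making this rigorous would require a genuine monodromy or specialisation argument over the moduli space that you have not supplied. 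The second strategy (push the ten odd $2$-torsion points through $\phi_H\colon A\to\hat A$) also does not obviously work: $\phi_H$ has degree~$9$, does not carry $C_A$ to $C_{\hat A}$, and matching up the two ten-point branch loci on $\PP^1$ is precisely the thing to be proved. Your remark that the characterisation of $C_A$ by ten odd $2$-torsion points ``is preserved under the duality'' only shows that $\hat A$ \emph{has} a distinguished hyperelliptic theta divisor $C_{\hat A}$; it says nothing about why that curve is abstractly isomorphic to $C_A$.

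The paper closes this gap by a quite different, non-deformation-theoretic argument: it invokes the Barth--Nieto description of $\Km(A)$ as an $H_{22}$-invariant quartic in $\PP^3$ carrying a $(32)_{(10)}$ configuration of lines, identifies the $16$ odd and $16$ even lines with the images of the translates of $C_A$ and of $C_{\hat A}$ respectively (using $\Km(A)=\Km(\hat A)$), and then produces an explicit involution in the extended Heisenberg group exchanging odd and even lines; restricting that involution to a line with its ten marked branch points yields $C_A\cong C_{\hat A}$. Once that is in hand, $JC_A=JC_{\hat A}$ and simplicity of a general $A$ give the conclusion, much as in your final paragraph. A secondary quibble: your concluding step ``$JC$ contains only the complementary pair $\hat A, K$ of such surfaces'' is not what Theorem~\ref{thm:13equiv}(iv) asserts; you need $A$ general (simple) to know there are only finitely many, indeed only the two complementary, $(1,3)$-subsurfaces, and you should say so explicitly before deducing $A\cong K$.
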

\begin{proof}
One implication follows from Proposition~\ref{prop:lemat}.

Now, recall from \cite{BN} that a (desingularised) Kummer surface of a
general $(1,3)$-polarised abelian surface $A$, denoted $\Km(A)$, can
be embedded as a $H_{22}$-invariant quartic surface in $\PP^3$
containing a $(32)_{(10)}$ configuration of lines. That means that
there are~$16$ pairwise disjoint lines, called odd lines, and $16$
pairwise disjoint lines, called even lines, such that every odd line
intersects exactly $10$ even lines and every even line intersects
exactly $10$ odd ones.  It is easy to see that the two groups of lines
are exactly images of hyperelliptic curves $C_A$ translated by
$2$-torsion points, and $2$-torsion points blown-up.

Moreover, $\Km(A)=\Km(\hat{A})$ and the surfaces can be reconstructed
by choosing which set of $16$ disjoint lines are to be blown down.
Now, the fact that $C_A\cong C_{\hat{A}}$ follows from the fact that
there exists an involution in the extended Heisenberg group that
changes odd lines to even ones: see~\cite[Section~4]{BN}. Restricting
the involution to a line with $10$ points marked, we obtain the
desired isomorphism. As $C_A\cong C_{\hat{A}}$, we have
$JC_A=JC_{\hat{A}}$ and, as $A$ can be chosen to be simple, we get the
assertion.
\end{proof}
\begin{remark}\label{rem:dualdiagrams}
Theorem \ref{thm:dualpair} shows that the kernel $K^0$ that occurs in
Diagram~\eqref{Diag1} is in fact isomorphic to $\hat{A}$. We can draw
two diagrams for embeddings in $A$ and $\hat{A}$ respectively.
\begin{multicols}{2}
\begin{equation}\label{Diag2}
  \begin{diagram}
\dgARROWLENGTH=2.5em
    \node{C}\arrow[4]{e,t}{f_{C,A}}\arrow[2]{se,r}{\alpha_O}
    \node[4]{A}\\[2]\node[3]{JC}
\arrow [2]{ne,r}{f_A}
    \\[2]
    \node[1]{\hat{A}}\arrow[2]{ne,r}{k_{\hat{A}}}  
  \end{diagram}
\end{equation}

\begin{equation*}\label{Diag3}
  \begin{diagram}
\dgARROWLENGTH=2.5em
    \node{C}\arrow[4]{e,t}{f_{C,\hat{A}}}\arrow[2]{se,r}{\alpha_O}
    \node[4]{\hat{A}}\\[2]\node[3]{JC}
\arrow [2]{ne,r}{f_{\hat{A}}}
    \\[2]
    \node[1]{A}\arrow[2]{ne,r}{k_A}  
  \end{diagram}
\end{equation*}
\end{multicols}
From the construction, we see that $k_A=\hat{f_A}$ and
$k_{\hat{A}}=\hat{f}_{\hat{A}}$, so the diagrams are dual to each other.
In particular, the rational map 
$$
\Phi\colon \cA_{(1,3)}\ratmap \cA_{(1,3)},\text{ given by }\hat{A}\To K^0,
$$ 
defined by the construction and Diagram~\eqref{Diag1}, is the
dualisation in the moduli space.
\end{remark}

This construction raises several questions. We should like more
information about the locus $\Is^4_{1,3}$ and more generally about the
loci $\Is^g_D$ in the moduli spaces of abelian varieties. Some answers
are given in~\cite{Bor}. Alternatively, one could look at the loci in
$\cM_4$ or $\Bar\cM_4$ determined by the curves $C_A$, and ask for the
class of the corresponding cycle in cohomology or the Chow
ring. One could also ask for a description of the sets of
ten points in $\PP^1$ that form branch loci of the hyperelliptic
involutions of the $C_A$. 

One could also ask about the genus~$4$ curves whose Jacobians contain
$(1,3)$-polarised surfaces. What are their properties? The question
seems to be interesting, because on the side of Jacobians, the locus
is naturally defined, whereas on $\cM_4$, it contains disjoint subloci
of hyperelliptic curves and covers of genus~$2$ curves. Those loci
seems to be completely differently treated in the moduli of curves
theory.

Finally, we ask whether it is possible to find genus~$4$ curves whose
Jacobian contains a pair of complementary abelian subvarieties that
are isomorphic to each other. If so, what are their properties?


\begin{thebibliography}{BOPY8}\small\baselineskip=16pt
\bibitem[AM]{AM} A.~Andreotti, A.~L.~Mayer, \emph{On period relations
  for abelian integrals on algebraic curves}, Ann. Scuola
  Norm. Sup. Pisa (3) {\bf 21} (1967), 189--238.

\bibitem[ACGH]{ACGH} E.~Arbarello, M.~Cornalba, P.~A.~Griffiths,
  J.~Harris, \emph{Geometry of Algebraic Curves I}, Grundlehren der
  Mathematischen Wissenschaften {\bf 267}. Springer-Verlag, New York,
  1985.

\bibitem[B]{B} W.~Barth, \emph{Abelian surfaces with
  (1,2)-polarization}, in \emph{Algebraic Geometry Sendai 1985
  (T.~Oda, Ed.)} 41--84, Adv. Studies in Pure Math. {\bf 10}, 1987.

\bibitem[BN]{BN} W.~Barth, I.~Nieto, \emph{Abelian surfaces of type
  $(1,3)$ and quartic surfaces with 16 skew lines}, J. Algebraic
  Geom. {\bf 3} (1994), 173--222.

\bibitem[BL]{BL} Ch.~Birkenhake, H.~ Lange, \emph{Complex abelian
  varieties}, Second edition. Grundlehren der Mathematischen
  Wissenschaften, {\bf 302}. Springer-Verlag, Berlin, 2004.
 
\bibitem[BL2]{BL2} Ch.~Birkenhake, H.~Lange, \emph{Moduli spaces of
  abelian surfaces with isogeny}, in \emph{Geometry and analysis (Bombay,
  1992)}, 225--243, Tata Inst.\ Fund.\ Res., Bombay, 1995.

\bibitem[Bo]{Bor} P.~Bor\'owka, \emph{Non-simple principally polarised
  abelian varieties}, to appear in Annali di Matematica Pura ed
  Applicata, First Online:
    08~August 2015, 1--19, {\tt
      http://dx.doi.org/10.1007/s10231-015-0527-6}.

\bibitem[BO]{BoOr} P.~Bor\'owka, A.~Ortega, \emph{Hyperelliptic genus
  5 curves}, in preparation.

\bibitem[BOPY]{BOPY} J.~Bryan, G.~Oberdieck, R.~Pandharipande, Q.~Yin,
  \emph{Curve counting on abelian surfaces and threefolds},
  {\tt arXiv:1506.00841}

\bibitem[M]{Mumfordprym} D.~Mumford, \emph{Prym varieties I},
 Contributions to analysis (a collection of papers
  dedicated to Lipman Bers), Academic Press, New York, 1974,  325--350.  

\bibitem[Ri]{R} J.~Ries, \emph{The Prym variety for a cyclic unramified
  cover of a hyperelliptic Riemann surface}, J. reine
  angew. Math. {\bf 340} (1983), 59--69.

\bibitem[Ro]{Ro} S.C.F.~Rose, \emph{Counting hyperelliptic curves on
  an Abelian surface with quasi- modular forms}, Commun. Number Theory
  Phys. {\bf 8} (2014), no.~2, 243--293.


\end{thebibliography}
\end{document}